\RequirePackage[l2tabu, orthodox]{nag}
\documentclass[a4paper, 11pt]{article}
\usepackage[utf8]{inputenc}
\usepackage[T1]{fontenc}
\usepackage{xspace}
\usepackage{mathtools}
\usepackage{bm}
\usepackage[final]{showkeys}
% improves spacing and typesetting
\usepackage{microtype}
\usepackage[title]{appendix}

% improves searching and copying in the pdf
\usepackage[noTeX]{mmap}
\usepackage[a4paper,left=1.35in, right=1.35in, bottom = 1.4in, top = 1.4in]{geometry}
\usepackage{amsmath,amsfonts, amsthm, amssymb}
\usepackage{color}
\usepackage{enumerate}
\usepackage{hypbmsec}
\usepackage{bbm}
\usepackage{dsfont}
\usepackage{graphicx}
\usepackage{caption}
\usepackage[section]{placeins}
\usepackage{tikz}
\usetikzlibrary{calc}
\usetikzlibrary{shapes}
\usetikzlibrary{decorations.pathreplacing}
\usetikzlibrary{arrows}
\usetikzlibrary{matrix}
\usetikzlibrary{fit}
\usetikzlibrary{positioning}
\usepackage{pgffor}
\usepackage{comment}

\usepackage{fancyhdr}
\pagestyle{fancy}
\lhead{}
%\lfoot{}
\cfoot{\thepage}
%\rfoot{}

\usepackage{hyperref} 
\hypersetup{colorlinks}

\theoremstyle{plain}
\newtheorem{theorem}{Theorem}[section]

\newtheorem{corollary}[theorem]{Corollary}
\newtheorem{lemma}[theorem]{Lemma}
\newtheorem{definition}[theorem]{Definition}

\theoremstyle{definition}
\newtheorem{remark}[theorem]{Remark}

\DeclarePairedDelimiter\abs{\lvert}{\rvert}%
\DeclarePairedDelimiter\norm{\lVert}{\rVert}%

\setlength{\parindent}{0cm}
\setlength{\parskip}{1.5ex}

%Commands

\newcommand{\RR}{\mathbb{R}}
\newcommand{\B}{\mathcal{B}}
\newcommand{\NN}{\mathbb{N}}
\newcommand{\ZZ}{\mathbb{Z}}

\newcommand{\E}{\mathbb{E}}

\newcommand{\Pb}{\mathbb{P}}

\newcommand{\indicator}{\mathds{1}}

\newcommand{\supp}{\operatorname{supp}}
\renewcommand{\d}{\mathsf{d}}

\newcommand{\pref}[1]{\hyperref[#1]{[p. \pageref{#1}]}}

\newcommand{\old}[1]{}

\newcommand{\SG}{\mbox{\textsf{SG}}\xspace}

% draw zeros in matrices in gray
\usepackage{color,array}
\def\zerotest{0}
\newcommand\greytest[2]{%
	\def\test{#2}%
	\ifx\test\zerotest
	\def\next{\textcolor[gray]{0.6}{0}}%
	\else
	\def\next{#1#2}%
	\fi
	\next}

%-----------------------------------
\title{Divisible sandpile on Sierpinski gasket graphs}

\author{Wilfried Huss and Ecaterina Sava-Huss}

\date{\today}

\begin{document}

\maketitle

\begin{abstract}
The {\it divisible sandpile model} is a growth model on graphs that was
introduced by \textsc{Levine and Peres}  \cite{levine_peres_2009} as a
tool to study internal diffusion limited aggregation.
In this work we investigate the shape of the {\it divisible sandpile
model} on the graphical Sierpinski gasket $\SG$. We show that the shape
is a ball in the graph metric of $\SG$. Moreover we give an exact
representation of the odometer function of the divisible sandpile.
\end{abstract}

\textit{2010 Mathematics Subject Classification.} 
60G50, % Random walks
60J10. % Markov chains

\textit{Key words and phrases.} 
Abelian network, divisible sandpile, Sierpinski gasket, self-similarity, odometer function, sand configuration, Green function.

\section{Introduction}\label{sec:intro}

The \emph{divisible sandpile} model was introduced by
\textsc{Levine and Peres} \cite{levine_peres_2009} as a tool
to study growth models such as \emph{internal diffusion limited aggregation} and \emph{rotor-router aggregation}. In the model
every vertex of a graph contains a certain mass of sand. If the
mass at a vertex exceeds a certain value (such a vertex is called unstable), the vertex is stabilized by distributing the excess mass uniformly among the neighbors of the vertex. The process continues as long as there are unstable vertices. We are interested in the set of vertices that have positive mass in the limit configuration when the process starts with
a big amount of mass at one vertex of the graph. Such a set is called the
\emph{divisible sandpile cluster}.
%	{\sc Dhar} \cite{Dhar-1990} introduced the so-called {\it abelian sandpile model (ASM)}, as a simple model that exhibits self-organized criticality (see \textsc{Bak et al.}\cite{bak-tang-wiesenfeld-87}). Since then, the study of self-organized criticality has become popular in other fields of natural sciences, and several modifications of the ASM were introduced:  non-abelian models, ASM on other state spaces, and {\it divisible sandpile}, a continuous version of the ASM, introduced in  \cite{levine_peres_2009}, as a tool for proving results about {\it internal diffusion limited aggregation}. The latter is a stochastic growth model on a graph,  which describes the formation of a random set of vertices growing from the origin of the graph. 
%In the current work we are interested in the limit shape of divisible sandpile model, which roughly speaking is defined as following. Let $G$ be an infinite connected graph, on which a distribution of mass is considered. Each vertex can have an arbitrary amount of mass, which changes in time.
%At each time-step one vertex distributes mass to all its neighbors and
%keeps only a fixed amount of mass for itself. Since the mass is continuous,
%the process never terminates and one needs to consider the limit of the
%process. One is mainly interested in the set of vertices where the limit
%mass distribution is nonzero. This set of vertices will be called the \emph{divisible sandpile cluster}.  
The limit shape of the divisible sandpile cluster was identified on $\mathbb{Z}^d$ in \cite{levine_peres_2009}, on homogeneous trees in \cite{Levine-sandpile-tree}, and on the comb lattice in \cite{idla-comb-huss-sava}. See also the recent survey \cite{levine_peres_2016} for an
introduction to the divisible sandpile model.

The aim of this paper is to identify the limit shape of the divisible sandpile cluster on the doubly-infinite Sierpinski gasket graph $\SG$, by making strong use of the property of $\SG$ of being finitely ramified, which  means that  it can be disconnected by removing a finite number of points. On the same graph, by using the limit shape of the divisible sandpile cluster, we prove in \cite{idla-sg-chen-huss-sava-teplyaev} a limit shape theorem for the internal diffusion limited aggregation.

\emph{The Sierpinski gasket graph} is a pre-fractal associated with the Sierpinski gasket, defined as following.
Given a subset $S\subset\ZZ^2$ and a function $\varphi:\ZZ^2\to\ZZ^2$ define
$\varphi(S) = \{\varphi(z):\: z\in S\}$. Let $G_0$
be the complete graph on the three given vertices $ \{(0,0),(1,0),(0,1)\}$ in $\ZZ^2$. Recursively
given a graph $G_k$ define its next iteration
\begin{equation*}
G_{k+1} = \bigcup_{i=0}^2 \varphi_{k,i}(G_k),
\end{equation*}
where $\varphi_{k,i}(x,y) = (x,y) + a_{k,i}$ with
$a_{k,0} = (0,0)$, $a_{k,1} = (2^k,0)$ and $a_{k,2} = (0,2^k)$.
The one-sided graphical Sierpinski gasket $\SG^+$ is then defined as
\begin{equation*}
\SG^+ = \bigcup_{k\geq 0} G_k.
\end{equation*}
Denote by $\SG^- = -(\SG^+)$ its mirror image. The double-sided graphical Sierpinski gasket $\SG$ is then defined as $\SG = \SG^+ \cup \SG^-$. In
the remainder of the paper we will call $\SG$ the {\it Sierpinski gasket} or the
{\it Sierpinski gasket graph} for simplicity.
We denote the neighborhood relation in $\SG$ by $\sim$. Note
that $\SG$ is a $4$-regular graph, and the vertex set of $\SG$ is a subset of the two dimensional integer  lattice $\ZZ^2$. This definition is convenient for our use, since it allows
us to specify vertices of $\SG$ simply by their rectangular coordinates.
Moreover, functions on $\SG$ will be denoted as functions $\ZZ^2\to\RR$ restricted to $\SG$. For the drawings we will use the more common
planar embedding given by the function
\begin{equation*}
\psi(x,y) = \left(x+\frac{1}{2}y,\frac{\sqrt{3}}{2}\abs{y}\right),
\end{equation*}
see Figure \ref{fig:embedding}.
\begin{figure}[t]
	\centering
	\input{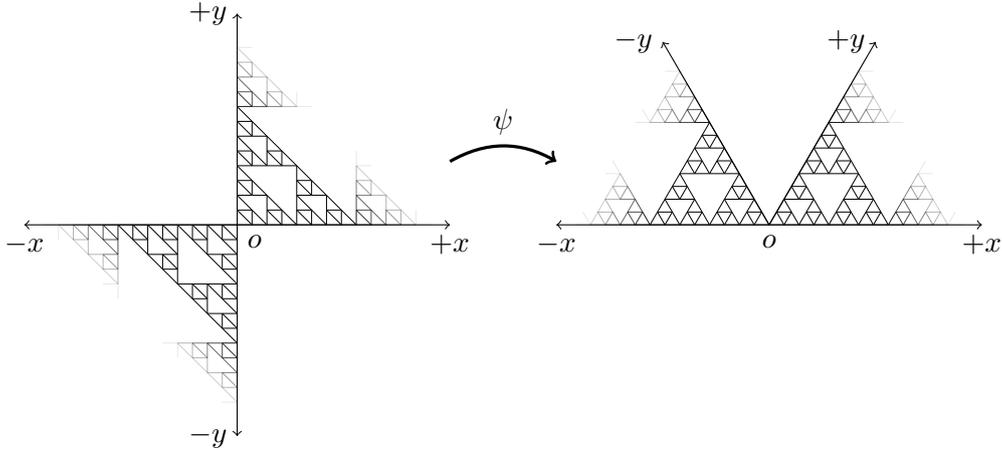}
	\caption{\label{fig:embedding} The embedding used to draw \SG.}
\end{figure}

%We consider in $\mathbb{R}^2$ the sets
%\begin{equation*}
%V_0=\{(0,0), (1,0), (1/2,\sqrt{3}/2)\}
%\end{equation*}
%and 
%\begin{equation*}
%E_0=\left\{\big((0,0),(1,0)\big),\big((0,0),(1/2,\sqrt{3}/2)\big),\big((1,0),(1/2,\sqrt{3}/2)\big)\right\}.
%\end{equation*}
%Now recursively define $(V_1,E_1), (V_2,E_2),\ldots$ by
%\begin{equation*}
%V_{n+1}=V_n+\left\{\big(2^n,0\big)+V_n\right\}\bigcup \left\{\left(2^{n-1},2^{n-1}\sqrt{3}\right)+V_n\right\}
%\end{equation*}
%and 
%\begin{equation*}
%E_{n+1}=E_n+\left\{\left(2^n,0\right)+E_n\right\}\bigcup \left\{\left(2^{n-1},2^{n-1}\sqrt{3}\right)+E_n\right\},
%\end{equation*}
%where $(x,y)+S:=\{(x,y)+s:s\in S\}$. Let $V_{\infty}=\cup_{n=0}^{\infty}V_n$, $E_{\infty}=\cup_{n=0}^{\infty}E_n$, $V=V_{\infty}\cup \{-V_{\infty}\}$ and $E=E_{\infty}\cup \{-E_{\infty}\}$.
%Then the doubly infinite Sierpinski gasket graph $\SG$, called in the literature also graphical Sierpinski gasket, 
%is the graph with vertex set $V$ and edge set $E$. See Figure \ref{fig:gasket} for a graphical representation of $\SG$.

%\begin{figure}
%	\centering
%	\input{graphics/gasket.tex}
%	\caption{\label{fig:gasket} Doubly-infinite Sierpinski Gasket graph.}
%\end{figure}

Our main result is the following shape theorem for the divisible sandpile model on $\SG$. Denote by $B_n$ the ball of radius $n$ and center $o=(0,0)$ in the graph metric of $\SG$, and by $b_n:=|B_n|-1/2|\partial B_n|$, with $\partial B_n=\{u\in B_n:\ \exists v\in B_n^c\ \text{ with } u\sim v\}$.
\begin{theorem}
\label{thm:sandpile_shape_thm}
For any $m\geq 0$, let $n = \max\{k \geq 0: b_k \leq m \}$. If  $S_m$ is the divisible sandpile cluster on $\SG$ with the initial mass configuration $\mu_0 = m\delta_o$,  then
\begin{align*}
B_{n-1} \subseteq S_m \subseteq B_n.
\end{align*}
\end{theorem}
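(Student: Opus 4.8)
The engine of the argument is the \emph{least action principle} for the divisible sandpile (Levine--Peres, \cite{levine_peres_2009}). Recall that the odometer $u$ of the initial mass $\mu_0 = m\delta_o$ is the pointwise smallest function among all $f \ge 0$ satisfying $\mu_0 + \Delta f \le 1$, where $\Delta$ is the normalized Laplacian of the $4$-regular graph $\SG$, and the stable configuration is $\nu = \mu_0 + \Delta u$. Writing this as an obstacle problem, $u = s - \gamma$ with $\Delta \gamma = \mu_0 - 1$ and $s$ the least superharmonic majorant of $\gamma$, one reads off the two facts I would use repeatedly: on the fully occupied set $F := \{u > 0\}$ the function $s$ is harmonic, so $\nu \equiv 1$ there, while $0 < \nu < 1$ exactly on the outer boundary of $F$; consequently $S_m = \{\nu > 0\}$ is the closed neighbourhood $\overline{F} = F \cup \{x : x \sim F\}$, and $m \mapsto u$, hence $m \mapsto S_m$, is monotone. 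First I would record that $\mu_0$ is invariant under the symmetry group $\Gamma$ of $\SG$ fixing $o$, so $u,\nu,F,S_m$ are all $\Gamma$-invariant; since every ball $B_k$ is $\Gamma$-invariant this reduces all statements to the radial profile of $u$.

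For the \textbf{outer inclusion} $S_m \subseteq B_n$ it suffices, because $S_m = \overline{F}$, to prove $F \subseteq B_n \setminus \partial B_n$, i.e.\ that no mass is ever toppled across the sphere of radius $n$. The plan is to produce an explicit super-solution: a function $w \ge 0$ supported on $B_n$, vanishing on $\partial B_n$, with $\mu_0 + \Delta w \le 1$, valid precisely when $m < b_{n+1}$; then the least action principle gives $u \le w$ and the claim follows. I would take $w$ to be the exact odometer of the sandpile ``filling $B_n$'': on $B_n^\circ$ it solves $\Delta w = 1 - m\delta_o$ with Dirichlet data $w = 0$ on $B_n^c$, so that
\begin{equation*}
w(x) = m\, g_{B_n}(x,o) - \sum_{y \in B_n} g_{B_n}(x,y),
\end{equation*}
with $g_{B_n}$ the Dirichlet Green function of $B_n$. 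Here the fractal enters: using self-similarity and finite ramification I would compute $g_{B_n}$ recursively, since the exterior of $B_n$ attaches to $B_n$ only through the finitely many cut points forming $\partial B_n$, so the relevant harmonic measures obey the resistance-scaling recursion of $\SG$. The inequality $\mu_0 + \Delta w \le 1$ then needs checking only at the finitely many boundary ``types'' of the self-similar ball, and it is exactly $m < b_{n+1}$ that keeps $w \ge 0$ up to $\partial B_n$ without forcing a toppling across it.

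For the \textbf{inner inclusion} $B_{n-1} \subseteq S_m$ I would combine mass conservation with the outer bound. Since $\nu \le 1$ and, by the previous step, $S_m \subseteq B_n$, all of the mass sits in $B_n$, so the total deficit satisfies
\begin{equation*}
\sum_{x \in B_n}\bigl(1 - \nu(x)\bigr) = |B_n| - m \le |B_n| - b_n = \tfrac12\,|\partial B_n|.
\end{equation*}
It then remains to show that this small deficit cannot reach inside $B_{n-1}$. For this I would establish a \emph{radial monotonicity} of the coincidence set: if $x \in F$ then every vertex on a geodesic from $o$ to $x$ also lies in $F$, so that $F^c \cap B_n$ is confined to an outer annulus; this should follow from the superharmonicity of $s$ together with $\Gamma$-symmetry. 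If some $v \in B_{n-1}$ had $\nu(v) < 1$, radial monotonicity and symmetry would force the entire $\Gamma$-orbit of $v$, and every vertex farther from $o$ along those rays, to carry positive deficit, producing a total deficit exceeding $\tfrac12|\partial B_n|$ --- a contradiction. Hence $B_{n-1} \subseteq F \subseteq S_m$.

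The step I expect to be the real obstacle is the explicit control of $g_{B_n}$, equivalently the exact odometer, on the fractal: one must solve the harmonic problem across the finitely-ramified cut points for \emph{every} radius $n$, not just the self-similar radii $n = 2^k$, and track how the half-filled boundary condition propagates inward. Getting the constant exactly right is what pins the thresholds to $b_n$, and, because the spheres of $\SG$ are geometrically irregular, is also what limits the conclusion to the one-step sandwich $B_{n-1} \subseteq S_m \subseteq B_n$ rather than an exact equality.
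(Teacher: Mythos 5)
Your argument has a genuine gap, and it originates in which set you call the cluster. The paper's $S_m$ is the set of toppled sites $\{u_m>0\}$ (Definition \ref{def:sandpile_cluster}), whereas you work with the positive-mass set of the limit configuration $\mu=\mu_0+\Delta u$ and reduce the outer inclusion to showing $F=\{u_m>0\}\subseteq B_n\setminus\partial B_n$ via a supersolution $w\geq 0$ vanishing on $\partial B_n$ with $\mu_0+\Delta w\leq 1$, claimed to exist ``precisely when $m<b_{n+1}$''. No such $w$ can exist once $m>\abs{B_n}$: since no vertex outside $B_n$ neighbours $B_n\setminus\partial B_n$, the function $\mu_0+\Delta w$ is supported in $B_n$, and $\sum_z\Delta w(z)=0$ gives
\begin{equation*}
m \;=\; \sum_{z\in B_n}\bigl(\mu_0+\Delta w\bigr)(z)\;\leq\;\abs{B_n},
\end{equation*}
while by Lemma \ref{lem:mass_second_wave} one has $b_{n+1}=\abs{B_n}+\abs{\partial B_n}>\abs{B_n}$, so the range $\abs{B_n}<m<b_{n+1}$ is nonempty. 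This is not a fixable defect of your particular Green-function formula; the statement you are trying to prove is false there. Indeed, if no vertex of $\partial B_n$ toppled, no mass could ever leave $B_n$, forcing $m\leq\abs{B_n}$ since $\mu\leq 1$; hence for $m>\abs{B_n}$ some vertex of $\partial B_n$ topples ($F\not\subseteq B_n\setminus\partial B_n$) and its outside neighbours receive positive mass ($\{\mu>0\}\not\subseteq B_n$). The outer inclusion is true only for the toppled-set cluster, and the correct supersolution must be allowed to be positive on $\partial B_n$ --- e.g.\ the odometer for mass $b_{n+1}$. That is in effect what the paper does: it first proves Theorem \ref{thm:odometer_bn} (for $m=b_n$ the cluster is exactly $B_{n-1}$, with limit mass $1$ on $B_n\setminus\partial B_n$ and $1/2$ on $\partial B_n$) by induction with a three-wave Abelian decomposition, and then sandwiches a general $m$ between the waves.

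The inner inclusion is also incomplete. Your deficit identity $\sum_{x\in B_n}(1-\mu(x))=\abs{B_n}-m$ presupposes that no mass sits outside $B_n$, i.e.\ exactly the outer inclusion that is in doubt; the ``radial monotonicity'' of $F$ is asserted, not proved (superharmonicity of the majorant plus symmetry does not obviously yield geodesic star-shapedness on $\SG$); and the final count is not quantitative --- knowing that each vertex in a $\Gamma$-orbit carries \emph{some} positive deficit gives no lower bound on the total, which you need to exceed $\tfrac12\abs{\partial B_n}$. The margin is tiny: for $n=2^k$ the set $\partial B_n$ consists of only the four extremal points, so $\tfrac12\abs{\partial B_n}=2$. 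By contrast, the paper's inner inclusion costs nothing: by the Abelian property the first wave, carrying mass $b_n\leq m$, already produces the cluster $B_{n-1}$ exactly, and the odometer can only increase when the remaining mass is released.
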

The paper is organized as follows. In Section \ref{sec:prelim} we introduce
the necessary notions on Sierpinski gasket graphs and some basic facts
about random walks and Green functions. Subsequently, in Section \ref{sec:divisible_sandpile} we formally define the divisible sandpile model. Section \ref{sec:sierpinski_gasket} is devoted to
the proof of Theorem \ref{thm:odometer_bn} which describes the limit shape of the divisible sandpile cluster with initial mass $b_n$ at the origin.
The main Theorem \ref{thm:sandpile_shape_thm} is then an easy consequence of Theorem \ref{thm:odometer_bn}. In Section \ref{sec:divisible_sandpile} and \ref{sec:sierpinski_gasket} we assume the existence of a function with Laplacian equal to $1$ on the whole graph. In Section \ref{sec:constant_laplacian} we give an explicit construction of such a function, with particularly nice properties, and we show the connection between this function and the odometer function of the divisible sandpile model on $\SG$. Then in Theorem \ref{thm:odometer_V_k} we give an explicit
construction of the odometer function for the divisbile sandpile
with initial mass of $3^{k+1}$ at the origin. In the explicit construction of a function with Laplacian $1$ on the Sierpinski gasket, we made use of a {\it generalized $\frac{1}{5}-\frac{2}{5}$} rule,  which will be proved in Appendix A. We conclude the paper with some questions.

\section{Preliminaries}\label{sec:prelim}

\subsection{The graphical Sierpinski gasket}

Let \SG be the Sierpinski gasket graph  as defined in the Introduction, and denote the neighborhood relation in \SG by $\sim$.
Recall that $\SG^+ = \SG \cap [0,\infty)^2$ (resp. $\SG^- = \SG \cap (-\infty,0]^2$) denotes the positive (resp. negative) branch of the Sierpinski gasket graph. For any subset $A\subset \SG$ write $A^+ = A\cap \SG^+$ and $A^- = A\cap \SG^-$. 
We denote the graph metric in \SG by $\d$, that is 
for vertices $u,v\in\SG$, $\d(u,v)$ is the length of the shortest path from $u$ to $v$.
Note that if $u = (x,y)\in\SG$ the distance to the origin is given
by $\d(o, u) = \abs{x} + \abs{y}$.
The ball of radius $n$ in the graph distance of $\SG$ around the origin is given by
$$B_n = \big\{(x,y)\in \SG: \abs{x} + \abs{y} \leq n\big\}.$$
For $k\geq 0$ denote by $V_k = B_{2^k}$ the {\it $k$-th full iteration in $\SG$}. The extremal points of $V_k$ are denoted by $\partial V_k = \big\{(2^k,0), (0,2^k), (-2^k,0), (0,-2^k)\big\}$. 
%Moreover let $V_k^+ = V_k \cap \SG^+$ and $V_k^- = V_k \cap \SG^-$

For any any set $A\subset\SG$, denote by $\partial A = \{u \in A: \exists v\not\in A \text{ s.t. } u\sim v\}$ {\it the inner boundary of $A$}, while
$\partial_\circ A = \{u \in\SG: u \not\in A \text{ and } \exists v\in A \text{ s.t } u\sim v\}$ denotes the {\it outer boundary}.
Denote
by  $S_n = \{u \in \SG: d(o,u) = n\}$ the {\it sphere of radius $n$}, and 
by $\partial_I B_n = S_n \setminus \partial B_n$ be the set
of points of the sphere or radius $n$ which have no neighbor outside the ball with the same radius.

Let $f:\SG\to\RR$ be a real valued function on \SG, then the operator
\begin{equation*}
\Delta f(x) = \frac{1}{4} \sum_{y\sim x} f(y) - f(x),
\end{equation*}
defines the \emph{discrete graph Laplacian} of $f$. If $\Delta f(x) =0$, then $f$ is called \emph{harmonic}, and if $\Delta f(x) \geq 0$ (respectively $\Delta f(x) \leq 0$ ), then $f$ is called \emph{subharmonic} (respectively \emph{super-harmonic}).

\subsection{Green function and random walks}

%Let $G$ be any infinite, locally finite, connected graph. We shall identify $G$ with its vertices and for $x,y\in G$ we write 
%$x \sim y$ if $(x,y)$ is an edge in $G$.  Denote by $\deg(x)$ the degree of $x$ in $G$, that is, the number of neighbors of $x$. 
The (discrete time) simple random walk (SRW) $\big(X(t)\big)_{t\geq 0}$ on $\SG$ is the (time homogeneous) 
Markov chain with one-step transition probabilities given by
\begin{equation*}
p(x,y):=\mathbb{P}[X(t+1)=y\mid X(t)=x]=\frac{1}{4}
\end{equation*}
if $y\sim x$, and $0$ otherwise. 
We denote by $\mathbb{P}_x$ and $\mathbb{E}_x$ the probability law and the expectation
of the random walk $X(t)$ starting at $x\in \SG$. For a
finite subset $A\subset \SG$ be denote by $g_A$ the Green function stopped at the set $A$. That is, if 
$$\tau_A = \inf\{t: X(t) \not\in A \}$$
is the first exit time
of $A$, then the \emph{stopped Green function} is defined as
\begin{align*}
g_A(x,y) = \E_x\left[\sum_{t=0}^{\tau_A - 1} \indicator_{\{X(t)=y\}}\right].
\end{align*}
The stopped Green function represents the expected number of visits to $y$ before exiting the set $A$, with the random walk starting at $x$.
The \emph{harmonic measure} of the set $A$ is then defined as
\begin{align*}
\nu(x) = \Pb_o\big[X(\tau_A) = x\big].
\end{align*}

\section{The divisible sandpile}\label{sec:divisible_sandpile}

In this section we formally define the {\it divisible sandpile model} on
$\SG$. We will mostly follow the notation of \cite{levine_peres_2009} where
the divisible sandpile model was originally introduced in the case of the
Euclidean lattice $\ZZ^d$. We give the full definition and will state
the main convergence results for the divisible sandpile to make the
presentation more self contained. We will need a slightly more
general version of the divisible sandpile as the one in
\cite{levine_peres_2009}. While all results of this section can be proven
on any locally finite graph, which admits an irreducible reversible Markov transition operator, for simplicity we will define the model only on the Sierpinski gasket graph $\SG$.

Fix a function $h:\SG\to [0,\infty]$, which describes the maximal
height of the sandpile at any vertex. We have to assume that $\sum_{z\in\SG} h(z) = \infty$ in order to ensure that the sandpile cluster, which will be defined in Definition \ref{def:sandpile_cluster}, is always a finite set.

\begin{remark}
If not specified otherwise, we will always let $h$ to be the constant function $1$. For the special case $h\equiv 1$ we recover the model as
defined in \cite{levine_peres_2009}.
\end{remark}
We call a function $\mu:\SG\to\RR_{\geq 0}$ with finite support
$\abs{\supp(\mu)}<\infty$ a \emph{sand distribution} on \SG.
Given a sand distribution $\mu$ and a vertex $x\in \SG$, the \emph{toppling operator} is defined as
\begin{equation*}
T_x \mu = \mu + \max \{ \mu(x)-h(x), 0 \} \Delta\delta_x.
\end{equation*}
The toppling operator $T_x$ affects the sand distribution as follows:
if the sandpile at $x$ exceeds the threshold $h(x)$, that is,
if $\mu(x) > h(x)$ the excess mass $\mu(x)-h(x)$ is distributed
equally among the neighbors of $x$. On the other hand, if the sandpile
at $x$ is smaller than the threshold, the sand distribution remains
unchanged.

Let now \(\mu_0\) be an {\it initial sand distribution} on $\SG$, and
$\big(x_k\big) _{k\geq 1}$ be a sequence of vertices in $\SG$ called
the \emph{toppling sequence}, with the property that $\big(x_k\big)$
contains each vertex of $\SG$ infinitely often.
We define the sand distribution of the sandpile after $k$
steps recursively as
\begin{equation*}
\mu_{k}(y) = T_{x_k}\mu_{k-1}(y) = T_{x_k}\cdots T_{x_1}\mu_0(y),
\end{equation*}
where $y\in\SG$.
The sand distribution $\mu_k$ represents the amount of mass at each
vertex of $\SG$ after the successive toppling of the vertices
$x_1,\ldots,x_k$. Denote by 
\begin{equation}
\label{eq:total_mass}
M = \sum_{x\in \SG} \mu_0(x)
\end{equation}
the total mass of the sandpile.
Note that by construction $M = \sum_{x\in SG} \mu_k(x)$,
for all $k\geq 0$. In other words, the total mass $M$ is conserved during the whole process, it just gets redistributed.
One important tool that will be used throughout this work is the
so-called \emph{odometer function} of the divisible sandpile,  introduced in \textsc{Levine and Peres} \cite{levine_peres_2009}.
\begin{definition}
	The \emph{odometer function} after $k$ topplings $u_k$ is defined as
	\begin{equation*}
	u_k(y) =  \sum_{j\leq k:\:x_j=y} \mu_j(y)-\mu_{j+1}(y),
	\end{equation*}
	and represents the total mass emitted from a vertex $y\in \SG$ during the first $k$ topplings.
\end{definition}

\subsection{Convergence of the divisible Sandpile}
\label{subsec:sandpile_conv}
We list here the relevant convergence results for the
divisible sandpile, whose proofs in the case of
Euclidean lattices can be found in $\mathbb{Z}^d$ \cite{levine_peres_2009}. 
The proofs work the same way on any regular graph $G$, as long as there
exists a function $\ell: G\to\RR$ with globally constant Laplacian, i.e.
$\Delta \ell(z) = 1$, for all $z\in G$. In the case of $\mathbb{Z}^d$
one can use the function $\ell(z) = \norm{z}^2$. 
On Cayley graphs of finitely generated groups, the existence
of a function with constant Laplacian on the whole graph follows from
a theorem of \textsc{Ceccherini-Silberstein and Coornaert}
\cite{ceccherini_coornaert}. We will construct such a function on $\SG$ in Section \ref{sec:constant_laplacian}.

In order to prove that the sequence of mass distributions $(\mu_k)_{k\geq 1}$ has a limit, one first
proves that the sequence of odometer functions $(u_k)_{k\geq 1}$ converges.
For a proof of the next lemma see \cite[Lemma 3.1]{levine_peres_2009}.
\begin{lemma}\label{lemma:convergence_odometer}
As $k\to\infty$, the sequence of functions $(u_k)_{k\geq 1}$ and the sequence of sand distributions $(\mu_k)_{k\geq 1}$ converge point-wise to limit functions $u_k \nearrow u$ and $\mu_k \to \mu$. Moreover, the limit functions $\mu$ and $u$ satisfy the following relation
\begin{equation*}
\mu(z) = \mu_0(z) + \Delta u(z) \quad \text{and}\quad \mu(z) \leq h(z),\quad \text{for all } z \in \SG.
\end{equation*}
\end{lemma}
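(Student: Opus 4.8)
The plan is to isolate the one genuinely hard point—finiteness of the limiting odometer—and to derive everything else from two elementary observations. First, each time a vertex $y$ is toppled it emits the nonnegative excess $\max\{\mu(y)-h(y),0\}$, so the odometer $u_k(y)$, being a finite sum of such terms over the topplings of $y$ up to step $k$, is nondecreasing in $k$; hence $u_k\nearrow u$ pointwise for some $u\colon\SG\to[0,\infty]$. Second, a single toppling at a vertex $x$ adds $e\,\Delta\delta_x$ to the current distribution, where $e\ge 0$ is the emitted excess at $x$; summing these contributions over the first $k$ topplings and using linearity of $\Delta$ gives the exact finite-step identity
\begin{equation*}
\mu_k=\mu_0+\Delta u_k\qquad\text{for every }k\ge 0 .
\end{equation*}
Because $\Delta$ is local (each value depends only on a vertex and its four neighbours) and $u_k\nearrow u$, the moment we know $u<\infty$ everywhere we automatically obtain $\Delta u_k(z)\to\Delta u(z)$, hence $\mu_k(z)\to\mu_0(z)+\Delta u(z)=:\mu(z)$ at every $z$. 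Thus the entire lemma reduces to proving $u(z)<\infty$ for all $z$.

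For finiteness I would exploit the auxiliary function $\ell$ with $\Delta\ell\equiv 1$ (whose existence on $\SG$ is established in Section \ref{sec:constant_laplacian}). Since $u_k$ has finite support, the self-adjointness of $\Delta$ on the $4$-regular graph $\SG$ (discrete Green's identity, legitimate here because all sums are finite) yields
\begin{equation*}
\sum_{z}u_k(z)=\sum_{z}u_k(z)\,\Delta\ell(z)=\sum_{z}\Delta u_k(z)\,\ell(z)=\sum_{z}\bigl(\mu_k(z)-\mu_0(z)\bigr)\ell(z).
\end{equation*}
As $\sum_z\mu_0(z)\ell(z)$ is a finite constant, finiteness of $u$ is reduced to a uniform-in-$k$ bound on $\sum_z\mu_k(z)\ell(z)$, i.e. to the assertion that the pile never spreads off to infinity. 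This is exactly where the standing hypothesis $\sum_z h(z)=\infty$ enters: the total mass $M=\sum_z\mu_0(z)$ is conserved by every toppling, so it cannot saturate an unbounded region up to the threshold $h$, and one expects $\supp(\mu_k)$ to remain inside a fixed ball $B_R$ with $R=R(M,h)$. Granting such confinement, $\sum_z\mu_k(z)\ell(z)\le M\max_{z\in B_R}\ell(z)$ is bounded uniformly in $k$, so $\sum_z u_k(z)$ is bounded; as all its terms are nonnegative, $u(x)\le\sup_k\sum_z u_k(z)<\infty$ for each $x$, and $\mu_k\to\mu$ pointwise as noted above.

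I expect the confinement step to be the main obstacle: proving rigorously that $\supp(\mu_k)$ stays in a fixed finite set amounts to controlling how far conserved mass can be pushed outward, and it is the quantitative core of \cite[Lemma 3.1]{levine_peres_2009}; an equivalent route is to exhibit a single finite-support $\beta\ge 0$ with $\mu_0+\Delta\beta\le h$ and invoke the least action principle to dominate $u_k\le\beta$. Once $u<\infty$ is secured, the final claim $\mu\le h$ is immediate. Fix a vertex $z$; since the toppling sequence $(x_k)$ visits $z$ infinitely often, there is an increasing sequence of steps $j_1<j_2<\cdots$ at which $z$ is toppled, and directly after each such toppling the value at $z$ equals $\min\{(\text{previous value at }z),h(z)\}\le h(z)$. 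Passing to the limit along this subsequence and using $\mu_{j_i}(z)\to\mu(z)$ gives $\mu(z)\le h(z)$, which completes the proof.
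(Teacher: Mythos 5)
Your plan is, in outline, exactly the proof the paper itself relies on: the paper gives no self-contained argument for this lemma but cites \cite[Lemma 3.1]{levine_peres_2009}, remarking that the only graph-specific ingredient needed to run that proof on $\SG$ is a function $\ell$ with $\Delta\ell\equiv 1$ (constructed in Section \ref{sec:constant_laplacian}). The steps you actually carry out are correct: monotonicity of $u_k$, the exact identity $\mu_k=\mu_0+\Delta u_k$ obtained by summing the toppling increments, the discrete Green's identity (legitimate for each fixed $k$, since $u_k$ is supported on the at most $k$ sites toppled so far), the passage to the limit via locality of $\Delta$ once $u<\infty$, and the subsequence argument giving $\mu\le h$.

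The genuine gap is the step you flag yourself and then assume ("granting such confinement"): that all toppling activity stays in one fixed finite set. This is the mathematical core of the lemma, and it should not be deferred, because for the paper's standing convention $h\equiv 1$ it has a two-line proof. Once a vertex actually emits mass, the toppling operator leaves it with mass exactly $1$, and between its topplings its mass can only increase (it only receives from toppling neighbours); hence every vertex that has ever emitted carries mass at least $1$ from its first emission onward. By conservation of the total mass $M$, at any time at most $M$ vertices have ever emitted, so the set $T$ of vertices that ever topple is a fixed set with $\abs{T}\le M$, and moreover $\supp(u_k)\subseteq T$ and $\supp(\mu_k)\subseteq \supp(\mu_0)\cup T\cup\partial_\circ T=:F$ for all $k$. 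Feeding this into your weighted identity, and using $\ell\ge 0$ (true for the paper's $\ell$, by Theorem \ref{thm:utilde_positive}), gives $\sum_z u_k(z)\le \sum_z\mu_k(z)\ell(z)\le M\max_{z\in F}\ell(z)<\infty$ uniformly in $k$, after which your argument is complete. Two caveats: first, your alternative route via a majorant $\beta$ with $\mu_0+\Delta\beta\le h$ is not really a shortcut, since both the least action principle and the construction of $\beta$ on $\SG$ require comparable work; second, the lemma as stated allows a general $h$ with only $\sum_z h(z)=\infty$, and there the counting argument above yields only $\sum_{x\in T}h(x)\le M$, which does not bound $\abs{T}$ unless $h$ is bounded below on the relevant region --- in the paper's actual uses ($h\equiv 1$, or $h=1$ on a finite set and $\infty$ elsewhere) confinement is immediate, but in full generality this step needs genuine care.
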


\begin{definition}
\label{def:sandpile_cluster}
We call $u$ the \emph{odometer function} of the divisible sandpile.
The set $\mathcal{S} = \{z\in\SG: u(z) > 0\}$ is called the \emph{divisible sandpile cluster}, or the {\emph sandpile cluster} for short.
\end{definition}

\begin{remark}
By construction $\mu(z) = h(z)$ for all $z\in\mathcal{S}$. It follows that $\mathcal{S}$ is a finite set, since by assumption $\sum_{z\in\SG} h(z) = \infty$.
\end{remark}

%========================================================
%=============Abelian property===========================
%========================================================
\subsection{Abelian Property}

Everything we did until now depends on the chosen toppling sequence
$(x_k)_{k\geq 0}$. In the next Lemma we prove the 
\emph{Abelian property}.

\begin{lemma}[Abelian Property]
\label{lem:abelian_property}
The odometer function $u$ is independent of the choice of the toppling sequence.
\end{lemma}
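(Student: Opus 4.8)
The plan is to establish the so-called \emph{least action principle}: among all functions $w\colon \SG \to \RR_{\geq 0}$ satisfying the stabilization inequality $\mu_0 + \Delta w \leq h$ pointwise, the odometer $u$ produced by a fixed toppling sequence is the pointwise smallest. Since the limit odometers arising from two different toppling sequences are both nonnegative (as increasing limits of the nonnegative functions $u_k$) and both satisfy $\mu_0 + \Delta u = \mu \leq h$ by Lemma~\ref{lemma:convergence_odometer}, each is an admissible competitor for the other, so applying the least action principle in both directions forces the two limits to coincide. This is exactly the Abelian property.

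To carry this out, I would first record the bookkeeping identity $\mu_k = \mu_0 + \Delta u_k$ for every $k \geq 0$ along a fixed toppling sequence $(x_k)$. This follows by induction on $k$: toppling at $x_k$ adds $c_k \Delta\delta_{x_k}$ to the mass, where $c_k = \max\{\mu_{k-1}(x_k) - h(x_k),\,0\}$, and simultaneously increases the odometer by $c_k$ at the single vertex $x_k$, so that $u_k = u_{k-1} + c_k\delta_{x_k}$; applying $\Delta$ to the odometer update reproduces the mass update.

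Next, fix an admissible $w$ and prove $u_k \leq w$ for all $k$ by induction, the base case $u_0 \equiv 0 \leq w$ being clear. In the inductive step only the value at $x_k$ can change, and when a genuine toppling occurs one computes, using the identity above,
\[
u_k(x_k) = u_{k-1}(x_k) + \mu_{k-1}(x_k) - h(x_k) = \frac{1}{4}\sum_{y \sim x_k} u_{k-1}(y) + \mu_0(x_k) - h(x_k),
\]
since $u_{k-1}(x_k) + \Delta u_{k-1}(x_k) = \frac{1}{4}\sum_{y\sim x_k} u_{k-1}(y)$. Applying the inductive hypothesis $u_{k-1} \leq w$ to each neighbor term and rewriting $\frac{1}{4}\sum_{y\sim x_k} w(y) = w(x_k) + \Delta w(x_k)$ gives
\[
u_k(x_k) \leq w(x_k) + \big(\mu_0(x_k) + \Delta w(x_k) - h(x_k)\big) \leq w(x_k),
\]
where the last inequality is precisely the admissibility of $w$. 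Letting $k \to \infty$ yields $u \leq w$, which is the least action principle; taking $w = \tilde u$ (the odometer of the second sequence) and then reversing the roles gives $u = \tilde u$.

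The genuinely delicate point is this inductive estimate: it is where the local toppling rule, the global identity $\mu_k = \mu_0 + \Delta u_k$, and the subharmonicity-type rearrangement must be combined correctly, and where the sign of $\mu_0 + \Delta w - h$ enters. Everything else—verifying that each limit odometer is an admissible competitor and the symmetric application producing equality—is routine once Lemma~\ref{lemma:convergence_odometer} is in hand.
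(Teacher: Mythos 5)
Your proof is correct, but it follows a genuinely different route from the paper's. The paper argues by contradiction on the limit objects: it sets $\mathcal{A}=\{z: u_1(z)>u_2(z)\}$, notes that every $z\in\mathcal{A}$ must have toppled in the first sequence so that $\mu_1(z)=h(z)$ while $\mu_2(z)\leq h(z)$ everywhere, deduces that $u_1-u_2$ is subharmonic on $\mathcal{A}$, and then invokes the maximum principle for subharmonic functions to force $u_1-u_2$ to be constant on $\mathcal{A}\cup\partial_\circ\mathcal{A}$, a contradiction. You instead prove a \emph{least action principle}: by induction along the toppling sequence, using the bookkeeping identity $\mu_k=\mu_0+\Delta u_k$ and the rearrangement $u_{k-1}(x_k)+\Delta u_{k-1}(x_k)=\frac{1}{4}\sum_{y\sim x_k}u_{k-1}(y)$, you show $u_k\leq w$ for every nonnegative $w$ with $\mu_0+\Delta w\leq h$, and then let each limit odometer play the role of $w$ for the other via Lemma~\ref{lemma:convergence_odometer}. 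Your inductive estimate at the toppled site is verified correctly, including the sign argument where admissibility of $w$ enters. What your approach buys: it is self-contained (no appeal to the maximum principle, hence no need for the finiteness of $\mathcal{A}$, which in the paper rests on the finiteness of the sandpile cluster), and it yields a strictly stronger statement --- the odometer is minimal among \emph{all} stabilizing functions, not merely independent of the toppling sequence --- which is a useful tool in its own right (it is the standard device in the later Levine--Peres literature). What the paper's approach buys: it is shorter once the maximum principle is available, and it works directly with the limit functions without tracking the finite-stage dynamics.
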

\begin{proof}
	Assume that there are two toppling sequences that result in different limits $u_1$ and $u_2$
	of the odometer function. Denote by 
	\begin{equation*}
	\mu_1 = \mu_0 + \Delta u_1 \text{ resp. } 
	\mu_2 = \mu_0 + \Delta u_2,
	\end{equation*}
	the resulting sand distributions, and by 
	\begin{equation*}
	S_1 = \{z\in \SG: u_1(z) > 0 \} \text{ resp. } S_2 = \{z\in \SG: u_2(z) > 0 \}
	\end{equation*}
the sets of vertices that toppled in each of the two toppling sequences.
Consider the set $\mathcal{A} = \{z\in \SG: u_1(z) > u_2(z) \}$. Since $u_2(z) \geq 0$ we have $\mathcal{A} \subset S_1$. In particular $\mathcal{A}$ is finite.
	Assume $\mathcal{A}$ is not empty. By construction $\mu_1(z) = h(z)$ for all $z\in\mathcal{A}$. By Lemma \ref{lemma:convergence_odometer}, $\mu_2(z) \leq h(z)$ for all $z\in \SG$, which implies that
 for all $z\in \mathcal{A}$
	\begin{equation}
	\label{eq:least_action_a_1}
	\mu_1(z) - \mu_2(z) \geq 0.
	\end{equation}
	
	Together with Lemma \ref{lemma:convergence_odometer} this yields
	\begin{align*}
	\Delta(u_1 - u_2)(z) = \big(\mu_1(z) - \mu_2(z)\big) \geq 0,
	\end{align*}
	for all $z\in \mathcal{A}$.
	Thus the function $f(z) = u_1(z) - u_2(z)$ is subharmonic on $\mathcal{A}$. Moreover $f(z) > 0$ for all $z\in \mathcal{A}$ and $f(z)\leq 0$ for all $z\not\in \mathcal{A}$.
	This implies that $f$ attains its maximum in the set $\mathcal{A}$. By the maximum
	principle for subharmonic functions (see i.e. \cite[Proposition 1.4]{kumagai_saintflour}), it follows that $f$ is constant on $A\cup\partial_\circ A$, which is a contradiction. Thus $\mathcal{A}$ is empty, and $u_1 \leq u_2$. 
Reversing the roles of $u_1$ and $u_2$ finishes the argument.
\end{proof}

\begin{remark}
	\label{rem_automorphism}
	A consequence of the Abelian property is that $u$, $\mu$ and $S$
	are invariant under all automorphisms of the graph $\SG$
	which fix the start distribution $\mu_0$.
\end{remark}

The next Lemma  provides a way to actually
compute the odometer function as the solution of a discrete obstacle
problem, see  \cite[Lemma 3.2]{levine_peres_2009}. We first
introduce some additional concepts.

\begin{definition}
	\label{def:superharmonic_majorant}
	Let $g: \SG\rightarrow \RR$ be a function on $\SG$. Define
	its \emph{least super-harmonic majorant} on a finite set
	$\mathfrak{B} \subset \SG$ as:
	\begin{equation*}
	s_g^\mathfrak{B}(z) = \inf \big\lbrace f(z):\: f \text{ super-harmonic on } \mathfrak{B},\, f \geq g \big\rbrace.
	\end{equation*}
\end{definition}

Remark that the function $s_g^\mathfrak{B}$ is itself super-harmonic on $\mathfrak{B}$. From Lemma \ref{lemma:convergence_odometer} we get 
\begin{equation*}
%\label{eq:sandpile_odometer_laplace}
\Delta u(z) = \mu(z) - \mu_0(z) \leq h(z) - \mu_0(z).
\end{equation*}
In particular, if $z$ is an element of the sandpile cluster $\mathcal{S}$ we have
\begin{equation}
\label{eq:sandpile_odometer_laplace}
\Delta u(z) = h(z) - \mu_0(z).
\end{equation}

Let $\mathfrak{B} = \big\{z\in\SG: \d(z, \supp \mu_0) \leq M \big\}$,
where $M$ is the total mass of the sandpile as defined in
\eqref{eq:total_mass}. Then trivially $\mathcal{S} \subset \mathfrak{B}$.

 Define the function $\gamma:\SG\rightarrow \RR$ as
\begin{equation*}
\gamma(z) = \sum_{y\in\mathfrak{B}} g_{\mathfrak{B}}(y,z)\big(\mu_0(y)-h(y)\big),
\end{equation*}
where $g_{\mathfrak{B}}$ is the Green function stopped at the set $\mathfrak{B}$. The function $\gamma$ has the following property
\begin{equation}
\label{eq:odometer_potential}
\Delta \gamma(z)  =  h(z)-\mu_0(z), \quad \text{for all } z \in \mathfrak{B}.
\end{equation}

\begin{lemma}
	\label{lem:calc_odometer}
	Let $\gamma$ be a function satisfying
	\eqref{eq:odometer_potential}, then the odometer function $u$ can be written as
	\begin{equation*}
	u \equiv (\gamma + s)\indicator_{\mathfrak{B}},
	\end{equation*}
	where $s = s_{-\gamma}^\mathfrak{B}$ is the least super-harmonic majorant of $-\gamma$, and
	\begin{equation*}
	\indicator_{\mathfrak{B}}(z) =
	\begin{cases}
	1, \quad & \text{if } z\in\mathfrak{B} \\
	0,       & \text{otherwise},
	\end{cases}
	\end{equation*}
	is the indicator function of the set $\mathfrak{B}$.
\end{lemma}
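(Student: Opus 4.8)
The plan is to prove the two inequalities $u \ge \gamma + s$ and $u \le \gamma + s$ on $\mathfrak{B}$ separately; outside $\mathfrak{B}$ both sides vanish, since $\mathcal{S}\subset\mathfrak{B}$ forces $u\equiv 0$ there and $\indicator_{\mathfrak{B}}$ kills the right-hand side. Throughout I would use Lemma \ref{lemma:convergence_odometer} in the form $\Delta u = \mu-\mu_0$ with $\mu\le h$ everywhere, together with \eqref{eq:sandpile_odometer_laplace} ($\Delta u = h-\mu_0$ on $\mathcal{S}$), \eqref{eq:odometer_potential} ($\Delta\gamma = h-\mu_0$ on $\mathfrak{B}$), and the two standing facts about $s=s_{-\gamma}^{\mathfrak{B}}$: it is super-harmonic on $\mathfrak{B}$ and it satisfies $s\ge-\gamma$ (so in particular $\gamma+s\ge 0$).

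For the lower bound I would look at $u-\gamma$ on $\mathfrak{B}$. For every $z\in\mathfrak{B}$,
\[
\Delta(u-\gamma)(z) = \big(\mu(z)-\mu_0(z)\big)-\big(h(z)-\mu_0(z)\big) = \mu(z)-h(z)\le 0,
\]
so $u-\gamma$ is super-harmonic on $\mathfrak{B}$, and since $u\ge 0$ it dominates $-\gamma$. By the very definition of the least super-harmonic majorant (Definition \ref{def:superharmonic_majorant}), any such function lies above $s$, hence $s\le u-\gamma$, i.e. $u\ge\gamma+s$ on $\mathfrak{B}$.

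For the upper bound I would argue by contradiction, mirroring the maximum-principle argument already used for the Abelian property (Lemma \ref{lem:abelian_property}). Set $f=u-(\gamma+s)$ and $\mathcal{A}=\{z\in\mathfrak{B}:f(z)>0\}$. Because $\gamma+s\ge 0$, every $z\in\mathcal{A}$ has $u(z)>0$, so $\mathcal{A}\subseteq\mathcal{S}$; in particular $\mathcal{A}$ is finite. On $\mathcal{A}$ we have $\Delta u = h-\mu_0$ by \eqref{eq:sandpile_odometer_laplace}, while $\Delta(\gamma+s)=(h-\mu_0)+\Delta s\le h-\mu_0$ by super-harmonicity of $s$; therefore $\Delta f\ge 0$, i.e. $f$ is sub-harmonic on $\mathcal{A}$. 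Since $f>0$ on $\mathcal{A}$ and $f\le 0$ on $\partial_\circ\mathcal{A}$, the maximum principle for sub-harmonic functions \cite[Proposition 1.4]{kumagai_saintflour} forces $f$ to be constant on $\mathcal{A}\cup\partial_\circ\mathcal{A}$, a contradiction unless $\mathcal{A}=\emptyset$. Thus $u\le\gamma+s$ on $\mathfrak{B}$, and combined with the lower bound this gives $u\equiv(\gamma+s)\indicator_{\mathfrak{B}}$.

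The step I expect to be the main obstacle is the upper bound, and within it the bookkeeping of the discrete Laplacian near $\partial\mathfrak{B}$: the identity $\Delta\big[(\gamma+s)\indicator_{\mathfrak{B}}\big]=\Delta(\gamma+s)$ that I implicitly used at a point $z\in\mathcal{A}$ only holds when all neighbours of $z$ again lie in $\mathfrak{B}$, because $s$ is not controlled outside $\mathfrak{B}$. To secure this I would first record that the cluster $\mathcal{S}$ (and hence $\mathcal{A}$) sits strictly inside $\mathfrak{B}$, so that $u$ and the comparison function both vanish on a neighbourhood of $\partial\mathfrak{B}$; then no neighbour of a point of $\mathcal{A}$ escapes $\mathfrak{B}$, and the super-harmonicity of $s$ delivers exactly the one-sided bound $\Delta(\gamma+s)\le h-\mu_0$ needed to run the maximum principle.
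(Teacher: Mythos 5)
Your proof is correct and takes essentially the same route as the paper: the lower bound is identical ($u-\gamma$ is a super-harmonic majorant of $-\gamma$, hence dominates $s$), and your upper bound—a contradiction via the maximum principle on $\mathcal{A}=\{z\in\mathfrak{B}: u(z)>(\gamma+s)(z)\}\subseteq\mathcal{S}$, in the style of the paper's Abelian-property lemma—is just a rephrasing of the paper's direct minimum-principle argument for $s+\gamma-u$ on $\mathcal{S}$, using exactly the same three facts $\Delta u=h-\mu_0$ on $\mathcal{S}$, $\Delta\gamma=h-\mu_0$ on $\mathfrak{B}$, and $\Delta s\leq 0$ on $\mathfrak{B}$. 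Your closing worry about boundary bookkeeping is moot: $s$ and $\gamma$ are defined on all of \SG, so super-harmonicity of $s$ on $\mathfrak{B}$ already gives $\Delta s(z)\leq 0$ at every $z\in\mathfrak{B}$ regardless of where the neighbours of $z$ lie, and no strict interiority of $\mathcal{S}$ in $\mathfrak{B}$ is needed.
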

\begin{proof}
	First we show that the odometer can be expressed in terms
	of $\gamma$ and $\mathfrak{B}$.
	By \eqref{eq:odometer_potential}, we know that
	$\Delta(u-\gamma)(z) \leq 0$  for $z\in \mathfrak{B}$.
	Therefore, $u-\gamma$ is super-harmonic on $\mathfrak{B}$.
	Also $u$ is nonnegative on $\mathfrak{B}$ and this implies that
	$u-\gamma \geq -\gamma$ on $\mathfrak{B}$. Therefore $u-\gamma$ is a super-harmonic majorant of
	$-\gamma$, which implies that $u\geq \gamma + s$ on the set $\mathfrak{B}$.
	
	In order to prove that $u-\gamma \leq s$, let us consider the function
	$s + \gamma - u$, which is super-harmonic on the sandpile cluster 
	$\mathcal{S}=\{x\in \SG: u(z)>0\}$, because, for all 
	$z\in\mathcal{S}$, one has
	\begin{equation*}
	\Delta(s+\gamma-u)(z) = \Delta s(z) \leq 0.
	\end{equation*}
Outside the sandpile cluster $\mathcal{S}$, $u(z) = 0$, and because $s$ is a majorant of $-\gamma$, 
we have $s + \gamma - u \geq 0$.
By the minimum principle for super-harmonic functions this inequality extends to the inside of
$S$, hence $u\leq \gamma + s$. Therefore, $u=\gamma+s$ on $\mathfrak{B}\supset S$.
\end{proof}

While Lemma \ref{lem:calc_odometer} can in principle be used to compute the odometer function, 
it is often difficult to use it practice, when working on state spaces, other than  $\ZZ^d$. In our particular case of $\SG$, we
guess the odometer function and then we prove that our guess is correct.
The next Lemma gives us a way to accomplish this.
\begin{lemma}
\label{lem:guess_odometer}
	Let $u_{\star}:\SG\to \RR_{\geq 0}$ be a function and let
	\begin{align*}
	A_{\star} &= \big\{z\in \SG:\: u_{\star}(z) > 0\big\} \\
	\mu_{\star} &= \mu_0 + \Delta u_{\star}.
	\end{align*}
	If $A_{\star}$ is finite, $\mu_{\star}(z) = h(z)$ for all $z\in A_{\star}$ and $\mu_{\star} \leq h$ then $u = u_{\star}$.
\end{lemma}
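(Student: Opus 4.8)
The plan is to show that the candidate $u_\star$ satisfies exactly the same structural properties as the genuine odometer $u$, and then to replay the maximum-principle argument from the proof of the Abelian property (Lemma \ref{lem:abelian_property}), this time comparing $u$ with $u_\star$ rather than comparing two odometers arising from different toppling sequences.

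First I would record what the true odometer satisfies. By Lemma \ref{lemma:convergence_odometer} the limit odometer $u$ is nonnegative, its cluster $\mathcal{S} = \{u > 0\}$ is finite, $\mu = \mu_0 + \Delta u \leq h$ on all of $\SG$, and $\mu(z) = h(z)$ for every $z \in \mathcal{S}$ (the last point being the remark following Definition \ref{def:sandpile_cluster}). The hypotheses on $u_\star$ are precisely the mirror image of these: $u_\star \geq 0$, the set $A_\star = \{u_\star > 0\}$ is finite, $\mu_\star = \mu_0 + \Delta u_\star \leq h$, and $\mu_\star(z) = h(z)$ on $A_\star$. Thus $u$ and $u_\star$ play completely symmetric roles, which is exactly what makes a two-sided comparison possible.

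Next I would prove $u \leq u_\star$. Set $\mathcal{A} = \{z \in \SG : u(z) > u_\star(z)\}$. Since $u_\star \geq 0$, any $z \in \mathcal{A}$ satisfies $u(z) > 0$, so $\mathcal{A} \subseteq \mathcal{S}$ and in particular $\mathcal{A}$ is finite. For $z \in \mathcal{A}$ we have $z \in \mathcal{S}$, hence $\mu(z) = h(z)$, while $\mu_\star(z) \leq h(z)$; therefore
\[
\Delta(u - u_\star)(z) = \mu(z) - \mu_\star(z) = h(z) - \mu_\star(z) \geq 0
\]
on $\mathcal{A}$, so $f = u - u_\star$ is subharmonic on $\mathcal{A}$. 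Because $f > 0$ on $\mathcal{A}$ and $f \leq 0$ off $\mathcal{A}$, the function $f$ attains its global maximum at an interior point of $\mathcal{A}$; restricting to the connected component of $\mathcal{A}$ containing such a maximizer and invoking the maximum principle for subharmonic functions (\cite[Proposition 1.4]{kumagai_saintflour}), $f$ would be constant on that component together with its outer boundary. This is impossible, since $f$ is strictly positive on $\mathcal{A}$ and nonpositive on $\partial_\circ \mathcal{A}$. Hence $\mathcal{A} = \emptyset$ and $u \leq u_\star$.

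Finally, exchanging the roles of $u$ and $u_\star$ — which is legitimate precisely because of the symmetry noted above — and running the identical argument with $\mathcal{A}' = \{z\in\SG: u_\star(z) > u(z)\}$ yields $u_\star \leq u$. Combining the two inequalities gives $u = u_\star$. I do not expect a genuine obstacle here: the real content is the observation that $u_\star$ obeys the same defining inequalities as a true odometer, after which the computation is a direct transcription of the Abelian-property argument. The only point requiring care, exactly as in Lemma \ref{lem:abelian_property}, is the passage to a connected component before applying the maximum principle, since $\mathcal{A}$ and $\mathcal{A}'$ need not be connected.
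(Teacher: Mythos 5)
Your proof is correct, but it follows a genuinely different route from the paper's. The paper proves Lemma \ref{lem:guess_odometer} by recycling the obstacle-problem machinery: it chooses a finite set $\mathfrak{B}\supset A_\star$ and a function $\gamma$ with $\Delta\gamma = h-\mu_0$ on $\mathfrak{B}$, reruns the argument of Lemma \ref{lem:calc_odometer} with $u_\star$ in place of $u$ to conclude $u_\star \equiv (\gamma + s_{-\gamma}^{\mathfrak{B}})\indicator_{\mathfrak{B}}$, and then invokes the Abelian property to say that all such representations yield the same function, hence $u_\star = u$. You instead bypass the least super-harmonic majorant entirely and compare $u$ and $u_\star$ directly, observing that the hypotheses on $u_\star$ mirror exactly the properties of the true odometer (nonnegativity, finite cluster, $\mu_\star \le h$ globally, $\mu_\star = h$ on the cluster, the latter coming from Lemma \ref{lemma:convergence_odometer} and the remark after Definition \ref{def:sandpile_cluster}), and then running the maximum-principle argument of Lemma \ref{lem:abelian_property} twice, once in each direction. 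What each approach buys: the paper's proof is shorter on the page because it leans on Lemma \ref{lem:calc_odometer}, but it leaves implicit the compatibility of different choices of $\mathfrak{B}$ (hidden in the phrase ``by the Abelian property all such representations give the same function''); your proof is self-contained modulo the discrete maximum principle, makes the symmetry between $u$ and $u_\star$ explicit, and is in fact slightly more careful than the paper's own Abelian-property argument, since you correctly restrict to a connected component of $\mathcal{A}$ before applying the maximum principle and note that the outer boundary of such a component is disjoint from $\mathcal{A}$ and nonempty (as $\mathcal{A}$ is finite and $\SG$ is infinite and connected), which is where the contradiction comes from.
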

\begin{proof}
	Choose a finite set $\mathfrak{B}$ such that $A_{\star} \subset \mathfrak{B}$ and a function
	$\gamma: \SG\to\RR$ satisfying \eqref{eq:odometer_potential}. We can then use the same argument as in
	the proof of Lemma \ref{lem:calc_odometer} to show that $u_{\star} \equiv (\gamma + s^{\mathfrak{B}}_{-\gamma} )\indicator_{\mathfrak{B}}$. By the Abelian property all such
	representations give the same function. Hence $u_{\star} = u$.
\end{proof}

Note that \textsc{Levine and Friedrich} \cite[Theorem 1]{friedrich_levine_2013}
used a similar approach to prove that a given function is equal to the
odometer function of a rotor-router aggregation process (see also \cite{kager_levine_2010,huss_sava_2011} where this
technique was also applied).

As an easy consequence we can interpret the stopped Green function as
the odometer function of a special divisible sandpile. 

\begin{corollary}
\label{cor:green_sandpile} Let $A\subset\SG$ be a finite. For $n > 0$ let $\mu_0(x) = \indicator_{A}(x) + n \delta_o(x)$ be the initial
sand configuration of a divisible sandpile with height function
\begin{align*}
h(x) = \begin{cases}
1 & \quad \text{for } x\in A, \\
\infty & \quad \text{for } x\not\in A.
\end{cases}
\end{align*}
The odometer function of this process is then given as 
$u(x) = n\cdot g_A(o,x)$. The limit sand distribution is equal to 
\begin{align*}
\mu(x) = \begin{cases}
 1 & \quad\text{for } x \in A, \\
 n\cdot\nu(x) & \quad\text{for } x \in \partial A, \\
 0            & \quad\text{otherwise},
\end{cases}
\end{align*}
where $\nu(x)$ is the harmonic measure of the set $A$.
\end{corollary}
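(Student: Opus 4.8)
The plan is to \emph{guess} the odometer and verify the guess with Lemma~\ref{lem:guess_odometer}. Set $u_\star(x) = n\, g_A(o,x)$, and assume throughout that $o\in A$ (otherwise the walk never enters $A$, so $g_A(o,\cdot)\equiv 0$ and the statement degenerates). I will use two standard facts about the stopped Green function of the simple random walk on $\SG$. First, since the walk is reversible with respect to the counting measure on the $4$-regular graph $\SG$, the Green function is symmetric: $g_A(o,x)=g_A(x,o)$. Second, $g_A(o,x)=0$ for every $x\notin A$, because for $t<\tau_A$ the walk stays inside $A$ and hence records no visit to a point outside $A$ before the exit time.

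From the first-step decomposition of $g_A(x,o)$ in its first argument one obtains, for $x\in A$,
\begin{equation*}
g_A(x,o) = \delta_o(x) + \tfrac14\sum_{w\sim x} g_A(w,o),
\end{equation*}
which, by symmetry and writing $f(x):=g_A(o,x)$, rearranges to $\Delta f(x) = -\delta_o(x)$ for $x\in A$; off $A$ we have $f\equiv 0$, so $\Delta f(x)=\tfrac14\sum_{w\sim x}f(w)\ge 0$ there. I then compute $\mu_\star=\mu_0+\Delta u_\star=\mu_0+n\Delta f$. On $A$ the two $n\delta_o$ contributions cancel and $\mu_\star\equiv 1 = h$; off $A$ both $\indicator_A$ and $\delta_o$ vanish, leaving $\mu_\star=n\Delta f\ge 0$, trivially $\le h=\infty$. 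Since $A_\star=\{f>0\}\subseteq A$ is finite, all three hypotheses of Lemma~\ref{lem:guess_odometer} hold, and it gives $u=u_\star=n\,g_A(o,\cdot)$, as claimed.

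It remains to read off the limit sand distribution $\mu=\mu_\star$. On $A$ we already have $\mu\equiv 1$. For $x\notin A$ the key identity is the last-step (exit) decomposition of the harmonic measure,
\begin{equation*}
\nu(x)=\Pb_o[X(\tau_A)=x]=\tfrac14\sum_{w\sim x} g_A(o,w),
\end{equation*}
which I would obtain by summing over the last vertex $w\in A$ visited before the exit step $w\to x$: each expected visit to $w$ before time $\tau_A$ is followed by a step to $x$ with probability $\tfrac14$, and since $g_A(o,w)=0$ for $w\notin A$ the sum extends harmlessly over all neighbours of $x$. Comparing with $\mu_\star(x)=n\Delta f(x)=\tfrac{n}{4}\sum_{w\sim x}g_A(o,w)$ then gives $\mu(x)=n\,\nu(x)$ for $x\notin A$, a quantity supported on the outer boundary $\partial_\circ A$ and vanishing elsewhere, which completes the description of $\mu$.

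The only real obstacle is careful bookkeeping with the Green function: one must ensure the Laplacian identity $\Delta f=-\delta_o$ is used in the \emph{second} argument (hence the appeal to reversibility), and that the harmonic-measure identity correctly uses the fact that the walk exits $A$ exactly once, so that the expected number of exit transitions $w\to x$ is precisely $\tfrac14 g_A(o,w)$ rather than an over-count. Beyond these points the argument is a direct substitution into Lemma~\ref{lem:guess_odometer}.
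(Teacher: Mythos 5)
Your proof is correct and follows essentially the same route as the paper, which simply invokes Lemma \ref{lem:guess_odometer} together with the fact that $\Delta g_A(o,x) = -\delta_o(x)$ for $x\in A$. You additionally supply the supporting details the paper leaves implicit (reversibility to move the Laplacian to the second argument, the first-step and last-exit decompositions, and the correct identification of the support of $\nu$ as the outer boundary $\partial_\circ A$), all of which check out.
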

\begin{proof}
The statement follows directly from Lemma \ref{lem:guess_odometer} together with the fact that $\Delta g_A(o,x) = -\delta_o(x)$, for all $x\in A$.
\end{proof}

\section{The sandpile cluster on the Sierpinski gasket}
\label{sec:sierpinski_gasket}

First of all, a simple combinatorial fact which involves cardinality of balls and their boundaries in $\SG$ will be needed.
\begin{lemma}
\label{lem:mass_second_wave}
For all $n\geq 1$, the following holds
$$\abs{B_n} + \abs{\partial B_n} = \abs{B_{n+1}} - \frac{1}{2}\abs{\partial B_{n+1}}.$$
\end{lemma}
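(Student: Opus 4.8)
The plan is to reduce the identity to a statement about boundary sizes alone and then prove that statement by double-counting the edges joining two consecutive spheres. First I would record two elementary facts: $|B_{n+1}| = |B_n| + |S_{n+1}|$, since $B_{n+1} = B_n \sqcup S_{n+1}$ and $S_{n+1}$ is exactly the set of vertices at distance $n+1$; and the disjoint decomposition $S_{n+1} = \partial B_{n+1} \sqcup \partial_I B_{n+1}$, which is the definition of $\partial_I$. Substituting the first into the claimed equality, it suffices to prove
\[
2|\partial B_n| = |S_{n+1}| + |\partial_I B_{n+1}|.
\]
Indeed, writing $|\partial_I B_{n+1}| = |S_{n+1}| - |\partial B_{n+1}|$ turns this into $2|\partial B_n| = 2|S_{n+1}| - |\partial B_{n+1}|$, i.e. $|\partial B_n| = |S_{n+1}| - \tfrac12|\partial B_{n+1}| = |B_{n+1}| - |B_n| - \tfrac12|\partial B_{n+1}|$, which is the assertion.

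To establish the displayed equality I would count in two ways the set $E$ of edges of $\SG$ joining a vertex at distance $n$ from $o$ to a vertex at distance $n+1$; these are precisely the edges between $\partial B_n$ and $S_{n+1}$. Counting from the inner side, each $v\in\partial B_n$ contributes the number of its neighbours at distance $n+1$ (its \emph{outward} neighbours), and I claim this is always $2$, so $|E| = 2|\partial B_n|$. Counting from the outer side, each $w\in S_{n+1}$ contributes the number of its neighbours at distance $n$ (its \emph{inward} neighbours); I claim this is $1$ when $w\in\partial B_{n+1}$ and $2$ when $w\in\partial_I B_{n+1}$, so $|E| = |\partial B_{n+1}| + 2|\partial_I B_{n+1}| = |S_{n+1}| + |\partial_I B_{n+1}|$. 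Equating the two counts gives the identity.

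Both local claims rest on one structural fact. Since $\d(o,(x,y)) = |x|+|y|$ and the branches $\SG^+,\SG^-$ meet only at $o$, for $n\geq 1$ no edge of $E$ crosses between branches, so all counts are twice their $\SG^+$ contribution and I may work in $\SG^+$, where distance is $x+y$. Every edge of $\SG^+$ is a side of a unit ``upward'' triangle, i.e. a translate $\{(a,b),(a+1,b),(a,b+1)\}$ of $G_0$; hence a vertex $(x,y)$ is incident only to the triangles having it as a corner,
\[
T_1=\{(x,y),(x{+}1,y),(x,y{+}1)\},\quad T_2=\{(x{-}1,y),(x,y),(x{-}1,y{+}1)\},\quad T_3=\{(x,y{-}1),(x{+}1,y{-}1),(x,y)\}.
\]
Each present triangle contributes two edges to distinct neighbours, the two outward neighbours $(x{+}1,y),(x,y{+}1)$ come only from $T_1$, and the two inward neighbours $(x{-}1,y),(x,y{-}1)$ come from $T_2,T_3$ respectively. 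As $\SG$ is $4$-regular, exactly two of the incident triangles are present (on the axes one of $T_2,T_3$ has its apex outside $\SG$ and is unavailable, so the other two are forced). Thus an outward neighbour exists iff $T_1$ is present, in which case there are exactly two of them, proving the first claim; and $w$ has two inward neighbours exactly when both $T_2,T_3$ are present, which by the two-of-three rule is exactly when $T_1$ is absent, i.e. $w\in\partial_I B_{n+1}$, and otherwise it has exactly one, proving the second.

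The main obstacle is precisely this local bookkeeping: one must verify that every edge of $\SG^+$ genuinely arises from an upward triangle, so that $T_1,T_2,T_3$ exhaust the incident triangles, and one must treat the axis and tip vertices carefully, where only two candidate triangles exist and $4$-regularity is what forces both to be present. Once the ``exactly two of three triangles'' dichotomy is pinned down together with these boundary cases, the global identity is immediate from the two edge counts, and the apparent irregularity of $|\partial B_n|$ as $n$ varies never needs to be computed explicitly.
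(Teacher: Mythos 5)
Your proof is correct, and it takes a genuinely different route from the paper's. The paper argues by a parity case distinction: for $n$ even it observes that $B_{n+1}\setminus B_n=\partial B_{n+1}$ and that every vertex of $\partial B_n$ has exactly two neighbours in $\partial B_{n+1}$, so $2\abs{\partial B_n}=\abs{\partial B_{n+1}}$; for $n$ odd it uses the self-similar structure to decompose the annulus $B_{n+1}\setminus B_{n-1}$ into isomorphic connected corner components and verifies the count on a single component. You instead reduce the lemma, uniformly in $n$, to the sphere identity $2\abs{\partial B_n}=\abs{S_{n+1}}+\abs{\partial_I B_{n+1}}$ and prove it by double-counting the edges between $\partial B_n$ and $S_{n+1}$, using $4$-regularity together with the fact that every edge of $\SG^+$ lies in a unique upward unit triangle; this yields the dichotomy that a sphere vertex has one inward neighbour if it lies in $\partial B_{n+1}$ and two if it lies in $\partial_I B_{n+1}$, while every vertex of $\partial B_n$ has exactly two outward neighbours. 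Your ``exactly two of the three incident upward triangles are present'' rule is exactly what underlies both of the paper's cases (for $n$ even one gets $\partial_I B_{n+1}=\emptyset$, for $n$ odd not), so your argument subsumes the case analysis: what it buys is uniformity, no need to identify the components of the annulus, and a reusable local characterization of $\partial_I$; what it costs is the careful triangle bookkeeping (uniqueness of the upward triangle containing each edge, and the axis vertices where one candidate triangle is unavailable), which you correctly flag and handle. The paper's proof, by contrast, is shorter modulo its two pictorial ``easy to see'' facts.
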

\begin{proof}
We distinguish two cases depending on the parity of $n$.

\emph{Case 1:} If $n$ is even, then $B_{n+1} \setminus B_n = \partial B_{n+1}$.
	Moreover every vertex of $\partial B_n$ is connected to exactly two vertices of $\partial B_{n+1}$, which gives $2\abs{\partial{B_n}} = \abs{\partial{B_{n+1}}}$.
	Thus
	\begin{equation*}
	\abs{B_{n+1}} - \abs{B_n} = \abs{\partial B_{n+1}} = 
	\abs{\partial B_n} + 1/2\abs{\partial{B_{n+1}}},
	\end{equation*}
	which proves the claim.
	
	\emph{Case 2:} If $n$ is odd, then $B_{n+1}\setminus B_{n-1}$ is a disjoint
	union of $k$ isomorphic connected graphs, see Figure \ref{fig:second_wave}. Let $C_i$, $i=1,\ldots,k$ be the connected components of
	$B_{n+1}\setminus B_{n-1}$ and let $B^i_n = B_n\cap C^i$ and $\partial B^i_n = \partial B_n\cap C^i$, for all $i=1,\ldots,k$.
Because the $C^i$ are isomorphic it suffices to prove the relation
	\begin{equation}
	\label{eq:second_wave_restricted}
	\abs{B^i_n} + \abs{\partial B^i_n} = \abs{B^i_{n+1}} - 1/2\abs{\partial B^i_{n+1}},
	\end{equation}
	for any $i\in\{1,\ldots,k\}$. It is easy to see that
	 $\abs{\partial B^i_{n+1}} = 2$ and $\abs{B_{n+1}^i}-\abs{B_{n}^i}=\abs{\partial B_n^i}+1$, which
proves \eqref{eq:second_wave_restricted}.
\end{proof}
\begin{figure}
	\centering
	\input{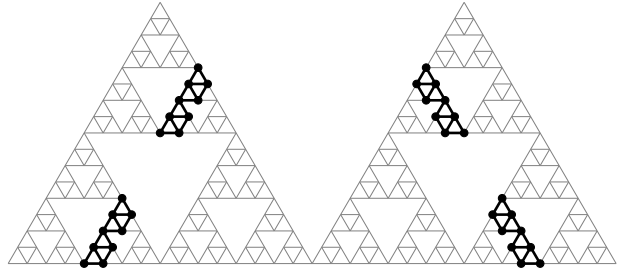}
	\caption{\label{fig:second_wave} The connected components of $B_{n+1} \setminus B_{n-1}$, for $n$ odd.}
\end{figure}

\begin{theorem}
\label{thm:odometer_bn} For every integer $n$ let $\mu_n$ be the limit sand distribution of the divisible sandpile on $\SG$ with initial mass distribution
$\mu_{n,0} \equiv b_n \delta_{(0,0)}$, where $b_n = \abs{B_n} - 1/2\abs{\partial B_n}$. Then $\mu_n$ is given by
\begin{align*}
\mu_{n}(z) = \begin{cases}
             1 & \text{if } z \in B_n \setminus \partial{B_n} \\
             1/2 & \text{if } z \in \partial{B_n}\\
             0 & \text{if } z \not\in B_n,
             \end{cases}
\end{align*}
and the corresponding sandpile cluster is $B_{n-1}$.
\end{theorem}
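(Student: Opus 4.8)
The plan is to guess the odometer explicitly and then confirm the guess with Lemma \ref{lem:guess_odometer}. On the sandpile cluster the limit mass equals $h\equiv 1$ while off the cluster the odometer vanishes, so the natural candidate is the lattice potential forced by these two constraints on $B_{n-1}$. Concretely, let $u_\star$ be the unique solution of the discrete Dirichlet problem
\begin{equation*}
\Delta u_\star(z) = 1 - b_n\delta_o(z)\ \text{ for } z\in B_{n-1}, \qquad u_\star(z) = 0\ \text{ for } z\in\SG\setminus B_{n-1},
\end{equation*}
which exists and is unique since the Laplacian at a point of $B_{n-1}$ only involves values on $B_{n-1}\cup\partial_\circ B_{n-1}$; equivalently $u_\star$ is the function $\gamma$ of \eqref{eq:odometer_potential} for the choice $\mathfrak{B}=B_{n-1}$, extended by zero. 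Setting $A_\star=\{u_\star>0\}$ and $\mu_\star=\mu_0+\Delta u_\star$, I would verify the three hypotheses of Lemma \ref{lem:guess_odometer}: that $A_\star$ is finite (immediate, as $A_\star\subseteq B_{n-1}$), that $\mu_\star\equiv 1$ on $A_\star$ (built into the definition, using $\mu_\star(o)=b_n+(1-b_n)=1$ and $\mu_\star=0+1=1$ elsewhere on $B_{n-1}$), and that $\mu_\star\le 1$ on all of $\SG$.

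The first genuine step is to show $A_\star = B_{n-1}$, i.e. that $u_\star>0$ throughout $B_{n-1}$, so that the candidate is admissible for Lemma \ref{lem:guess_odometer}. Writing $u_\star = b_n\, g_{B_{n-1}}(o,\cdot) - \sum_{y\in B_{n-1}} g_{B_{n-1}}(\cdot,y)$ exhibits it as a large positive multiple of the stopped Green function corrected by a bounded potential; the defining value $b_n=\abs{B_n}-\tfrac12\abs{\partial B_n}$ is exactly the mass for which the positive source at $o$ dominates on $B_{n-1}$ while the induced outflow does not spill strictly past the sphere $S_n$. Making this precise I would do by exploiting the self-similarity of $\SG$, comparing $B_{n-1}$ against the full iterations $V_k=B_{2^k}$ and propagating positivity scale by scale, and by reducing, via the automorphism invariance of Remark \ref{rem_automorphism}, to a single symmetry sector.

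The core of the argument is the computation of $\mu_\star$ on the outer boundary $\partial_\circ B_{n-1}\subseteq S_n$, since $\mu_\star=1$ on $B_{n-1}$ is already in hand and $\mu_\star=0$ at every vertex of distance $\geq n+1$ (all of whose neighbours lie outside $B_{n-1}$). For a sphere vertex $z\in S_n$ one has $\mu_\star(z)=\tfrac14\sum_{y\sim z,\,y\in B_{n-1}} u_\star(y)$, because the remaining neighbours of $z$ lie on $S_n$ or outside $B_n$, where $u_\star$ vanishes. Thus everything reduces to the local adjacency pattern of $\SG$ at $z$ — how many neighbours point inward to distance $n-1$ — together with the odometer values on $\partial B_{n-1}$. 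Exploiting the finite ramification and self-similarity of $\SG$, I expect this inward influx to evaluate to exactly $1$ when $z$ has no neighbour outside $B_n$ (that is, $z\in\partial_I B_n$) and to exactly $\tfrac12$ when $z\in\partial B_n$; in particular every sphere vertex turns out to be adjacent to the cluster, so $\partial_\circ B_{n-1}=S_n$. This is where I would run an induction on the scale, or invoke the fine structure of the constant-Laplacian function on $\SG$ whose existence is assumed here.

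Global consistency is furnished by mass conservation together with Lemma \ref{lem:mass_second_wave}: applied with index $n-1$ it gives $b_n=\abs{B_{n-1}}+\abs{\partial B_{n-1}}$, so the mass forced onto $S_n$ equals $b_n-\abs{B_{n-1}}=\abs{\partial B_{n-1}}$, which one checks coincides with $\abs{\partial_I B_n}+\tfrac12\abs{\partial B_n}$, exactly the total of the claimed boundary values. Once $\mu_\star\le 1$ is confirmed everywhere, Lemma \ref{lem:guess_odometer} yields $u=u_\star$; hence the cluster is $A_\star=B_{n-1}$ and $\mu_n=\mu_\star$ has the stated form. The main obstacle is the local computation of the third paragraph: the values $1$ and $\tfrac12$ are dictated by the self-similar adjacency structure of the gasket along $S_n$, and controlling the odometer there — rather than the softer potential-theoretic and counting steps — is the real content of the proof.
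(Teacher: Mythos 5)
Your setup is sound, and your candidate is the right object: the function $u_\star$ you define (the $\gamma$ of \eqref{eq:odometer_potential} for $\mathfrak{B}=B_{n-1}$, extended by zero) is what the odometer must be if the theorem holds, and Lemma \ref{lem:guess_odometer} is the correct verification device — the same one the paper ultimately invokes. The identification $\partial_\circ B_{n-1}=S_n$ and the mass bookkeeping via Lemma \ref{lem:mass_second_wave} are also correct. But there is a genuine gap, and it sits exactly where you flag it: you never prove that $\tfrac14\sum_{y\sim z,\,y\in B_{n-1}}u_\star(y)$ equals exactly $1$ for $z\in\partial_I B_n$ and exactly $\tfrac12$ for $z\in\partial B_n$, nor that $u_\star>0$ on $B_{n-1}$ (even nonnegativity is needed, since Lemma \ref{lem:guess_odometer} only applies to candidates $u_\star\colon\SG\to\RR_{\geq 0}$). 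These are exact identities for the stopped Green function $g_{B_{n-1}}$, and nothing in the proposal computes them; "I expect this inward influx to evaluate to exactly $1$" is the theorem, not a proof of it.

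The fallback tools you offer cannot close this gap. Mass conservation plus Lemma \ref{lem:mass_second_wave} yields a single linear identity, compatible with infinitely many boundary profiles. Automorphism invariance (Remark \ref{rem_automorphism}) is far too weak: the stabilizer of $o$ in $\operatorname{Aut}(\SG)$ is a finite group of order at most $8$ (generated by the coordinate swap on each branch and the branch exchange), so its orbits on $S_n$ have at most $8$ points, while $\partial B_n$ and $\partial_I B_n$ split into many orbits — already $\partial B_3$ contains the two distinct orbits of $(3,0)$ and $(2,1)$. Hence symmetry cannot force $\mu_\star$ to be constant on $\partial B_n$ or on $\partial_I B_n$, let alone pin down the values $\tfrac12$ and $1$. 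What you defer to "induction on the scale" is precisely the content of the paper's proof: it proceeds by induction on $n$, splitting the sandpile of mass $b_{n+1}$ into three waves — the mass-$b_n$ sandpile (induction hypothesis), a Green-function wave that, via Corollary \ref{cor:green_sandpile} and uniformity of harmonic measure, deposits mass $2$ at each vertex of $\partial B_n$, and a third wave whose odometer is the explicit function $2\indicator_{B_n}$ — and then checks the sum with Lemma \ref{lem:guess_odometer} by a direct Laplacian computation. That explicit decomposition is the mechanism that produces the sphere values $1$ and $\tfrac12$; your proposal presupposes them, so as written it is a plan whose central step is missing.
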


\begin{proof}
The proof goes by induction over $n$. For the base case we have $b_1 = 3$, thus after one single toppling of the origin we already reach the limit sand configuration with mass $1$ at the origin and mass $1/2$ at all neighbors of the origin.

Now assume that the statement of the theorem is true for some $n$ and denote by $u_n$
the limit odometer function of the sandpile with initial sand distribution $\mu_{n,0}$.
In the inductive step we want to construct the odometer function $u_{n+1}$ of the
sandpile with initial mass $b_{n+1}$ at the origin using the odometer function $u_n$.
We will accomplish this by splitting the sandpile topplings into three separate waves. First we send mass $b_n$ from the origin, and then the remaining mass $b_{n+1} - b_{n} = 3/2\abs{\partial B_n}$ (by Lemma \ref{lem:mass_second_wave}) will be send in the last two waves.

\emph{1st wave:} The first wave is just the sandpile with initial distribution
$\mu_{n,0}$ and sandpile height function $h(x)=1$. By the induction hypothesis the
odometer $u^{\bm{(1)}}$ of this first wave is equal to $u_n$ and the final mass distribution
$\mu^{\bm{(1)}} = \mu_n$.

\emph{2nd wave:} For the second wave we start with the final sand configuration of
the first wave $\mu^{\bm{(1)}}$ and add the remaining mass $3/2\abs{\partial B_n}$ at the origin.
For this second wave we only topple sites that where fully occupied (i.e. have mass 1) during the first wave. That is, we look at the divisible sandpile with initial mass configuration
\begin{align*}
\mu_0^{\bm{(2)}} = \mu^{\bm{(1)}} + 3/2\abs{\partial B_n} \delta_o,
\end{align*}
and sandpile height function 
\begin{align*}
	h(x) = \begin{cases}
             1 \quad& \text{for } x\in B_n \setminus \partial B_n, \\
             \infty & \text{otherwise}.
   	       \end{cases}
\end{align*}
Since by the induction hypothesis $\mu^{\bm{(1)}}$ is equal to $1$ on
$B_n \setminus\partial B_n$ we can apply Corollary \ref{cor:green_sandpile} and we
get for the odometer function of the second wave
\begin{align*}
u^{\bm{(2)}}(x) = 3/2\abs{\partial B_n} g_{B_n \setminus\partial B_n}(o,x),
\end{align*}
where $g_{B_n \setminus\partial B_n}$ is the Green function stopped at the set $B_n \setminus\partial B_n$. Moreover the final sand distribution after the second wave of topplings is
given by
\begin{align*}
\mu^{\bm{(2)}}(x) = \mu^{\bm{(1)}}(x) + 3/2\abs{\partial B_n}\nu(x),
\end{align*}
where $\nu(x) = \Pb_o[X(\tau_{B_n \setminus\partial B_n}) = x]$ is the harmonic measure of the set $B_n \setminus\partial B_n$ with the simple
random walk started at the origin.
The support of $\nu$ is exactly the set $\partial B_n$. Moreover by the symmetry of
the Sierpinski gasket graph it is clear that $\nu$ is uniform on the set $\partial B_n$, i.e.,
\begin{align*}
\nu(x) = \frac{\indicator_{\partial B_n}(x)}{\abs{\partial B_n}},
\end{align*}
which implies
\begin{align*}
\mu^{\bm{(2)}}(x) = \begin{cases}
   1\quad &\text{for } x\in B_n\setminus \partial{B_n},\\
   2      &\text{for } x\in \partial{B_n},\\
   0      &\text{otherwise}.
   \end{cases}
\end{align*}

\begin{figure}
\centering
\includegraphics[width=\textwidth]{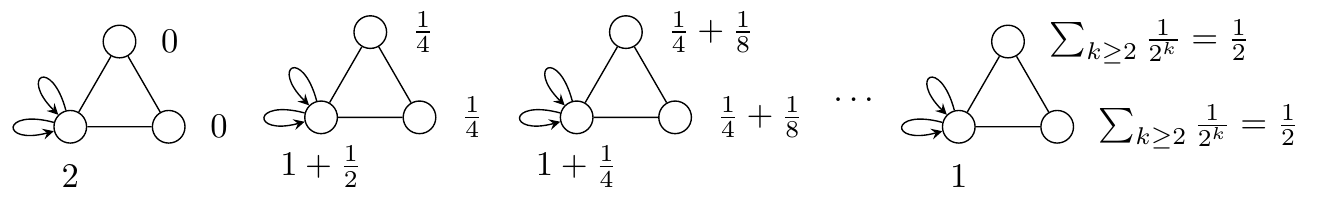}
\caption{\label{fig:sandpile_triangle}Steps of a divisible sandpile on a triangle with two added loops at the origin, starting with a mass of $2$ at the origin. The limit odometer at the origin is equal to $u(o) = 1+\frac{1}{2} + \frac{1}{4} + \frac{1}{8} + \dots = 2$.}
\end{figure}

\emph{3rd wave:} For the 3rd wave we start with the final mass distribution of the
second wave, that is $\mu_0^{\bm{(3)}} = \mu^{\bm{(2)}}$, and we use again the usual height function $h(x)\equiv 1$.
The situation at the start of the $3$rd wave is depicted in
 Figure \ref{fig:third_wave}. Each of the outer small triangles behaves like the gadget depicted in Figure \ref{fig:sandpile_triangle}. Since the gray area is already filled, all mass that is sent to the inside has to come out again eventually. By symmetry, the amount of mass sent out to each boundary point will be the same, thus the whole interior has the same effect as adding two loops to each boundary point in $\partial B_n$. Since no more mass can accumulate in the interior, the odometer function in $B_n\setminus\partial B_n$ increases during the 3rd wave by a harmonic function
which  is equal to $2$ at all the boundary points $\partial B_n$. It follows that the odometer function of the 3rd wave is given by
\begin{align*}
u^{\bm{(3)}}(x) = 2 \indicator_{\B_n}(x).
\end{align*}

\begin{figure}[b]
\centering
\includegraphics{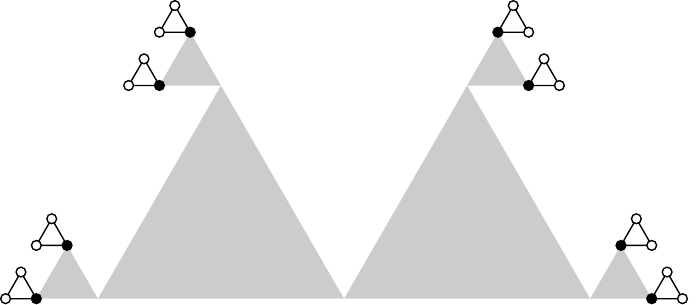}
\caption{\label{fig:third_wave}The sandpile at the start of the 3rd wave. The gray shaded region represents $B_n\setminus\partial B_n$ which is completely filled, i.e., $\mu^{(2)} \equiv 1$. The black dots $\bullet$ are the boundary points in $\partial B_n$ with sand height $2$. The white dots $\circ$ contain no sand.}
\end{figure}

To finish the argument we have to show that the sum of the odometers of the three waves $u^\star = u^{\bm{(1)}} + u^{\bm{(2)}} + u^{\bm{(3)}}$ is equal to $u_{n+1}$.
For this we apply Lemma \ref{lem:guess_odometer} to $u^\star$.
By the induction hypothesis we have
\begin{align*}
\Delta u^{\bm{(1)}}(x) &= 
\begin{cases}
1 - b_n\delta_o(x)\quad& \text{if } x \in B_n \setminus \partial{B_n}, \\
1/2 & \text{if } x \in \partial{B_n},\\
0 & \text{if } x \not\in B_n.
\end{cases}
\intertext{For the odometers of the second and third wave we get}
\Delta u^{\bm{(2)}}(x) &= 
\begin{cases}
-3/2\abs{\partial B_n} \quad&\text{if } x = o, \\
3/2 & \text{if } x \in \partial{B_n},\\
0 & \text{otherwise}
\end{cases}
\intertext{and}
\Delta u^{\bm{(3)}}(x) &= \begin{cases}
-1 \quad &\text{if } x \in \partial B_n, \\
0        &\text{if } x \in B_n \setminus \partial B_n, \\
1/2      &\text{if } x \in \partial B_{n+1}, \\
1        &\text{if } x \in \partial_I B_{n+1}, \\
0        &\text{otherwise}. 
\end{cases}
\intertext{Finally it follows by the linearity of the Laplacian that}
\Delta u^\star(x) &= \begin{cases}
1- (b_n + 3/2\abs{\partial B_n})\delta_o \quad&\text{if } x \in B_n,\\
1                                             &\text{if } x \in \partial B_n,\\
1                                             &\text{if } x \in \partial_I B_{n+1},\\
1/2                                           &\text{if } x \in \partial B_{n+1},\\
0                                             &\text{otherwise}
\end{cases} \\
&= \begin{cases}
1- b_{n+1}\delta_o \quad&\text{if } x \in B_{n+1}\setminus\partial B_{n+1}\\\
1/2                                           &\text{if } x \in \partial B_{n+1}\\
0                                             &\text{otherwise}
\end{cases} \\
&= \mu_{n+1} - \mu_{n+1,0}.
\end{align*}
Moreover $\{x\in\SG: u^\star(x) > 0\} = B_n$. Thus Lemma \ref{lem:guess_odometer}
implies that $u^\star = u_{n+1}$, which finishes the inductive step.
\end{proof}
The main result Theorem \ref{thm:sandpile_shape_thm} is just an application of the previous result.
\begin{proof}[Proof of Theorem \ref{thm:sandpile_shape_thm}.]
The proof proceeds by the same wave argument as used in the proof of Theorem \ref{thm:odometer_bn}.
In the first wave start with mass $b_n$ at the origin. By Theorem \ref{thm:odometer_bn}
the sandpile after this wave has exactly the shape $B_{n-1}$. For the second wave add
the remaining mass $m - b_n < 3/2\abs{\partial B_n}$ at the origin. For the third wave
there will be less then 2 mass at each vertex of $\partial B_n$. Hence the final sandpile
cluster after the third wave will be a subset of $B_{n}$.
\end{proof}

\section{Functions with constant Laplacian}
\label{sec:constant_laplacian}

For the proof of convergence of the divisible sandpile we have assumed the
existence of a function $\ell:\SG\to\RR$ with $\Delta \ell(x) = 1$ for
all $x\in\SG$. In this section we construct such a function with
particularly nice properties and show how is it connected with the
divisible sandpile model.

Recall that we are working with the representation of the $\SG$, as given in the left part of Figure \ref{fig:embedding}.
\begin{definition}
A subset $T$ of  $\SG$ is called a \emph{proper triangle} of size $2^k$, if the subgraph induced by $T$ is isomorphic to $V_k^+$.
If $T$ is a proper triangle, its extremal points $\partial T$ are
either of the form $\partial T =\big\{a, a + (2^k,0), a + (0,2^k)\big\}$ for some $a\in\SG^+$, or of the form $\partial T =\big\{a, a - (2^k,0), a - (0,2^k)\big\}$ for some $a\in\SG^-$.
\end{definition}

\begin{definition}
	Let $T$ be a proper triangle of size $2^k$ in $\SG$ for some $k\geq 1$,
	and let $\partial T = \big\{a, b, c)\big\}$,
	for some  $a,b,c\in \SG^+$. Without loss of generality we can assume $b = a + (2^k,0)$ and $c = a +(0,2^k)$.
	The \emph{midpoints} of $T$ are then given by
	\begin{align*}
	A &= (b+c)/2 = a + (2^{k-1}, 2^{k-1}) \\
	B &= (a+c)/2 = a + (0, 2^{k-1}),\\
	C &=(a+b)/2  = a + (2^{k-1},0).
	\end{align*}
For proper triangles which are subsets of $\SG^-$ the midpoints are
defined analogously. See Figure \ref{fig:proper_triangle} for a diagram of a proper triangle.	
\end{definition}

\begin{definition}
A subset $B$ of $\SG$ is called a \emph{proper ball} of size $2^k$, if the subgraph induced by $B$ is isomorphic to $V_k$.
See Figure \ref{fig:proper_triangle}.
\end{definition}

Next we define a simple function $\tilde{u}:\SG^+\to\RR$ which
has Laplacian $1$ everywhere except at the origin $o$.

\begin{definition}
	\label{def:utilde}
	Let $\tilde{u}:\SG^+\to\RR$ be the function defined by
	\begin{align*}
	\tilde{u}(x,0) &= 0 \text{ for all } x\geq 0, \\
	\tilde{u}(x,1) &= 2 \text{ for all } x\geq 0 \text{ s.t. } (x,1)\in\SG^+
	\end{align*}
	and $\Delta \tilde{u}(x,y) = 1$, for all $x,y\in\SG^+$ with $y\geq 1$.
\end{definition}

\begin{figure}[t]
	\centering
	\input{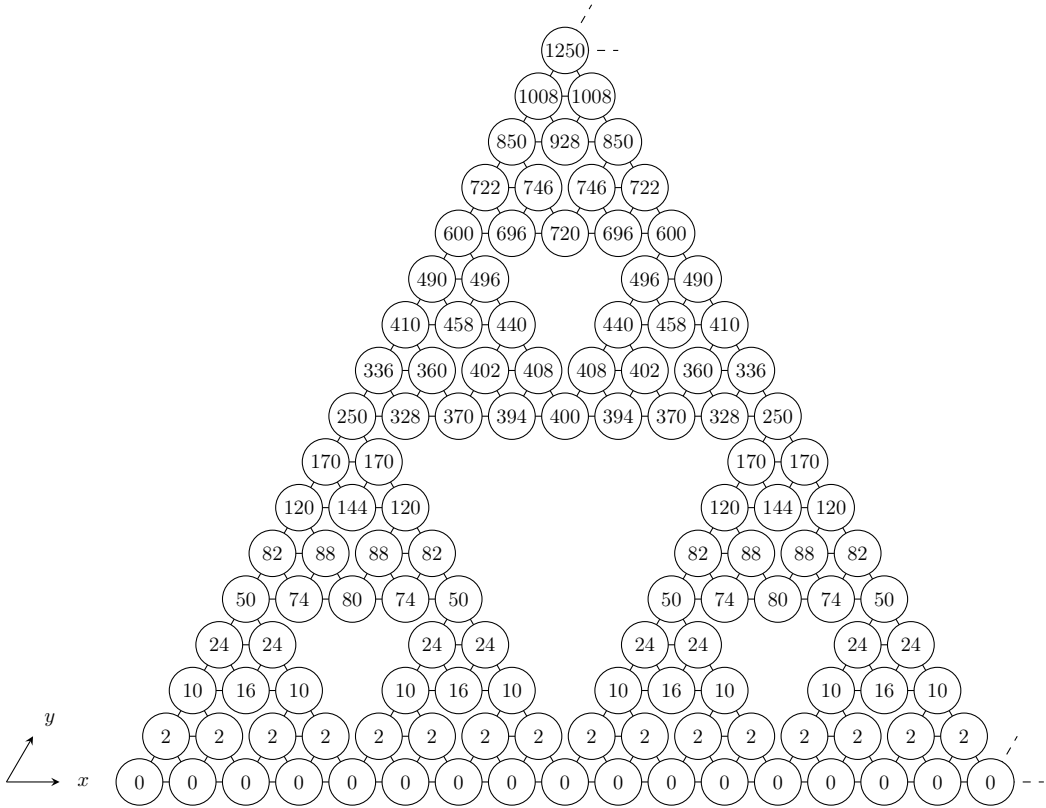}
	\caption{\label{fig:utilde} The function $\tilde{u}$.}
\end{figure}

\begin{remark}
	It is easy to check that the definition implies that $\Delta \tilde{u}(x,0) = 1$ for all $x\geq 1$.
\end{remark}
A priori, it is not clear whether such a function $\tilde{u}$ is well defined, and if it is unique. This is what we prove next. In the proof of the next result we are going to use a {\it generalized $\frac{1}{5}-\frac{2}{5}$ rule for functions with constant Laplacian} in Sierpinski gasket graphs. In the fractal community, the 
$\frac{1}{5}-\frac{2}{5}$ rule for harmonic functions on the gasket $\SG$ is well-known, but we need it in a more general setting. The proof of the {\it generalized $\frac{1}{5}-\frac{2}{5}$ rule for functions with constant Laplacian} will be postponed for the Appendix, in Theorem \ref{thm:1/5-2/5rule}. 

\begin{theorem}
	\label{thm:utilde_uniqueness}
	The function $\tilde{u}$ defined in Definition \ref{def:utilde} is unique.
	Moreover $\tilde{u}(0,2^k) = 2\cdot 5^k$ for all $k\geq 0$.
\end{theorem}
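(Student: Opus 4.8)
The plan is to reduce the whole statement to the behaviour of $\tilde{u}$ on the nested proper triangles $V_k^+$ and to extract a single scalar recursion for the apex values $v_k:=\tilde{u}(0,2^k)$. The starting point is that $\SG$ is finitely ramified: a proper triangle $V_k^+$ meets the rest of $\SG^+$ only through its three extremal points $(0,0),(2^k,0),(0,2^k)$. Hence every non-corner vertex of $V_k^+$ has all four of its $\SG$-neighbours inside $V_k^+$, so the ambient Laplacian and the Laplacian of the induced subgraph agree there. Since $\Delta\tilde{u}(x,y)=1$ whenever $y\ge 1$ and, by the remark following Definition~\ref{def:utilde}, also at the axis points $(x,0)$ with $x\ge 1$, the restriction $\tilde{u}|_{V_k^+}$ is a function of constant Laplacian $1$ on the proper triangle $V_k^+$, with corner data $\tilde{u}(0,0)=\tilde{u}(2^k,0)=0$ and $\tilde{u}(0,2^k)=v_k$.

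First I would invoke the generalized $\tfrac15$-$\tfrac25$ rule of Theorem~\ref{thm:1/5-2/5rule}: for a size-$2^k$ proper triangle carrying a function $f$ with $\Delta f\equiv 1$, the value at the midpoint of the edge joining corners $P_i,P_j$ (the one opposite the third corner $P_\ell$) equals $\tfrac25\bigl(f(P_i)+f(P_j)\bigr)+\tfrac15 f(P_\ell)-2\cdot 5^{k-1}$. Applying this once and then descending into the three size-$2^{k-1}$ sub-triangles, whose corner values have just become known, determines $f$ at \emph{every} vertex of $V_k^+$. Thus $\tilde{u}|_{V_k^+}$ is completely determined by the single number $v_k$, which already reduces the problem to pinning down the sequence $(v_k)_{k\ge 0}$.

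The decisive cross-scale relation comes from reading one vertex two ways. The midpoint of the left edge of $V_k^+$ is $(0,2^{k-1})$, lying on the edge joining $(0,0)$ and $(0,2^k)$ and opposite $(2^k,0)$; since the two relevant corner values vanish, the rule gives $\tilde{u}(0,2^{k-1})=\tfrac25 v_k-2\cdot 5^{k-1}$. But $(0,2^{k-1})$ is also the apex of the smaller triangle $V_{k-1}^+$, so $\tilde{u}(0,2^{k-1})=v_{k-1}$. Equating yields $v_{k-1}=\tfrac25 v_k-2\cdot 5^{k-1}$, i.e. $v_k=\tfrac52 v_{k-1}+5^k$. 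Together with the base value $v_0=\tilde{u}(0,1)=2$ forced by the second defining condition, this recursion has a unique solution, so all $v_k$ and hence, by the previous paragraph, all values of $\tilde{u}$ on $\SG^+=\bigcup_k V_k^+$ are uniquely determined; this is the uniqueness claim. Solving the recursion, $v_k=2\cdot 5^k$ works because $\tfrac52\cdot 2\cdot 5^{k-1}+5^k=5^k+5^k=2\cdot 5^k$, which gives $\tilde{u}(0,2^k)=2\cdot 5^k$ by induction.

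For well-definedness I would run the argument backwards: define $\tilde{u}$ on each $V_k^+$ as the unique constant-Laplacian-$1$ function with corner data $(0,0,2\cdot 5^k)$ (the solution of a finite linear system, equivalently produced by the rule), and note that the identity $v_k=\tfrac52 v_{k-1}+5^k$ is exactly the compatibility making these restrictions agree on the overlaps $V_{k-1}^+\subset V_k^+$; they therefore glue to one function on $\SG^+$ with $\Delta\tilde{u}=1$ off the origin, and a short induction on the scale (applying the rule in each sub-triangle, e.g. the axis midpoint of $V_k^+$ receives $\tfrac15 v_k-2\cdot 5^{k-1}=0$) confirms $\tilde{u}\equiv 0$ on the axis and $\tilde{u}\equiv 2$ on the row $y=1$. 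The genuine difficulty is entirely inside the generalized $\tfrac15$-$\tfrac25$ rule with its scale-dependent correction $-2\cdot 5^{k-1}$, which I am assuming from Theorem~\ref{thm:1/5-2/5rule}; within the present proof the only delicate points are the finite-ramification justification that the rule may be applied to $\tilde{u}|_{V_k^+}$ with the ambient Laplacian, and the cross-scale identification of $(0,2^{k-1})$ as simultaneously an edge-midpoint of $V_k^+$ and the apex of $V_{k-1}^+$, while the gluing and boundary-row verifications are routine inductions on the self-similar structure.
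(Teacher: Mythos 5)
Your proposal is correct and shares the paper's overall strategy: everything is reduced to the generalized $\frac{1}{5}-\frac{2}{5}$ rule of Theorem \ref{thm:1/5-2/5rule}\eqref{thm:1/5-2/5rule:a} applied to the triangles $V_k^+$, with uniqueness coming from the fact that corner values determine a constant-Laplacian function on a proper triangle. Where you genuinely differ is in how the apex value $v_k=\tilde{u}(0,2^k)$ is pinned down. The paper does it in one shot at scale $k$: since $\tilde{u}$ vanishes at $(0,0)$, at $(2^k,0)$, and also at the axis midpoint $(2^{k-1},0)$, the rule applied to that midpoint gives $0=\frac{1}{5}v_k-2\cdot 5^{k-1}$, hence $v_k=2\cdot 5^k$ with no induction and no appeal to the row $y=1$ (except trivially for $k=0$). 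You instead apply the rule at the edge midpoint $(0,2^{k-1})$ and exploit the nesting $V_{k-1}^+\subset V_k^+$ to identify that midpoint with the apex of the smaller triangle, producing the recursion $v_k=\frac{5}{2}v_{k-1}+5^k$ anchored at $v_0=\tilde{u}(0,1)=2$; this is equally valid, and your check that the axis midpoint then receives $\frac{1}{5}v_k-2\cdot 5^{k-1}=0$ is precisely the paper's defining equation read backwards, so the two computations are transposes of one another. What each buys: the paper's version is shorter and induction-free; yours makes the cross-scale consistency (the gluing of the restrictions $\tilde{u}|_{V_k^+}$) explicit, and accordingly your existence argument is somewhat more detailed than the paper's, which simply asserts that existence and uniqueness follow from the rule. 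Like the paper, you leave the routine downward induction (axis $\equiv 0$, row $y=1$ equal to $2$) unexpanded, which matches the level of rigor of the original proof.
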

\begin{proof}
	The set $V_k^+$ is a proper triangle of size $2^k$ in $\SG^+$ with
	extremal points 
	\begin{equation*}
	\partial V_k^+ = \big\{ (0,0), (2^k,0), (0,2^k) \big\}.
	\end{equation*}
By definition $\tilde{u}(0,0) = \tilde{u}(2^k,0) = 0$.
Applying  Theorem
	\ref{thm:1/5-2/5rule}\eqref{thm:1/5-2/5rule:a}
	to the proper triangle $V_k^+$ gives 
	\begin{equation*}
	\tilde{u}(2^{k-1},0) = \frac{1}{5}\big( 2 \tilde{u}(2^k,0) + 2\tilde{u}(0,0) +\tilde{u}(0, 2^k)\big) - 2\cdot 5^{k-1},
	\end{equation*}
which together with $\tilde{u}(2^{k-1},0) = 0$, implies $\tilde{u}(0,2^k) = 2\cdot 5^k.$
	
Again by the {\it generalized $\frac{1}{5}-\frac{2}{5}$ rule} in Theorem \ref{thm:1/5-2/5rule}, the values of $\tilde{u}$ on any proper
	triangle $T$ are uniquely determined by its values on the extremal points
	$\partial T$. Hence the existence and uniqueness of $\tilde{u}$ follows.
\end{proof}

See Figure \ref{fig:utilde} for a plot of the function $\tilde{u}$. Note that when we extend $\tilde{u}$ to the whole of $\SG$ by
reflection at the origin we get a function with Laplacian equal to $1$ everywhere.
That is
\begin{equation*}
\ell(x,y) = \tilde{u}(\abs{x},\abs{y}),
\end{equation*}
is a function with Laplacian 1 globally, as  needed in Section \ref{subsec:sandpile_conv}. While
$\ell$ is only used as a technical tool in Section \ref{subsec:sandpile_conv} we will see below that the function $\tilde{u}$ has
a much deeper link to the divisible sandpile on the Sierpinski gasket.
We prove next some properties of $\tilde{u}$, in particular
that it is integer valued and non-negative.

\begin{lemma}\label{lem:tildeu_divisibility}
Let $T$ be a proper triangle of size $2^k$, $k\geq 1$,
with extremal points $\partial T = \big\{a,b,c\big\}$ and midpoints $M = \big\{A,B,C\big\}$. Fix $m\in\ZZ$ and let $h$ be a function on $\SG$ with $\Delta h(z) = m$, for all $z\in T$. Assume the value of $h(z)$ is divisible by $2\cdot 5^k$ for all $z\in\partial T$. Then $h(w)$ is divisible by $2\cdot 5^{k-1}$, for all $w \in M$.
\end{lemma}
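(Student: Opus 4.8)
The plan is to reduce everything to a single application of the generalized $\frac{1}{5}-\frac{2}{5}$ rule (Theorem~\ref{thm:1/5-2/5rule}), which is the only place where the arithmetic of the factor $5$ enters, and then to do a short bookkeeping computation. Recall from the proof of Theorem~\ref{thm:utilde_uniqueness} that for a function with Laplacian $1$ the rule expresses the value at a midpoint of a proper triangle of size $2^k$ as $\frac{1}{5}$ of a weighted sum of the three extremal values minus $2\cdot 5^{k-1}$. Since the rule is affine-linear in the boundary data together with the constant $m$ (the harmonic case being the ordinary $\frac{1}{5}-\frac{2}{5}$ rule, and the $m=1$ computation fixing the coefficient of $m$), its general form for a function with $\Delta h\equiv m$ on $T$ reads, for the three midpoints $A=(b+c)/2$, $B=(a+c)/2$, $C=(a+b)/2$,
\begin{align*}
h(A) &= \frac{1}{5}\big(h(a) + 2h(b) + 2h(c)\big) - 2m\cdot 5^{k-1},\\
h(B) &= \frac{1}{5}\big(2h(a) + h(b) + 2h(c)\big) - 2m\cdot 5^{k-1},\\
h(C) &= \frac{1}{5}\big(2h(a) + 2h(b) + h(c)\big) - 2m\cdot 5^{k-1}.
\end{align*}
The first step is therefore simply to quote these three identities from Theorem~\ref{thm:1/5-2/5rule}.

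The second step is pure bookkeeping. By hypothesis I may write $h(a) = 2\cdot 5^k\,\alpha$, $h(b) = 2\cdot 5^k\,\beta$, $h(c) = 2\cdot 5^k\,\gamma$ with $\alpha,\beta,\gamma\in\ZZ$. Substituting into the identity for $h(C)$ and factoring out $2\cdot 5^{k-1}$ gives
\begin{equation*}
h(C) = 2\cdot 5^{k-1}\big(2\alpha + 2\beta + \gamma\big) - 2m\cdot 5^{k-1} = 2\cdot 5^{k-1}\big(2\alpha + 2\beta + \gamma - m\big),
\end{equation*}
and since $2\alpha+2\beta+\gamma-m\in\ZZ$ the value $h(C)$ is divisible by $2\cdot 5^{k-1}$. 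The identical computation applied to $h(A)$ and $h(B)$ — the only change being which of $\alpha,\beta,\gamma$ carries the single weight — yields the same conclusion, so all three midpoint values in $M$ are divisible by $2\cdot 5^{k-1}$. For proper triangles contained in $\SG^-$ the argument is verbatim, using the analogous statement of Theorem~\ref{thm:1/5-2/5rule}.

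I do not expect a genuine obstacle here: the whole content is the observation that a weighted average $\frac{1}{5}(2h(a)+2h(b)+h(c))$ of values divisible by $2\cdot 5^k$ is automatically divisible by $2\cdot 5^{k-1}$, one factor of $5$ being absorbed by the $\frac{1}{5}$, and that the inhomogeneous correction term $-2m\cdot 5^{k-1}$ shares exactly the same divisibility because $m$ is an integer. The only real input is the precise form of the generalized $\frac{1}{5}-\frac{2}{5}$ rule, and in particular that its correction term is an integer multiple of $2\cdot 5^{k-1}$ after multiplication by the integer $m$; this is supplied by Theorem~\ref{thm:1/5-2/5rule}, whose proof is deferred to the Appendix. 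Granting that rule, the lemma follows immediately from the displayed factorizations.
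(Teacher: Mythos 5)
Your proof is correct and takes essentially the same route as the paper's: both quote Theorem \ref{thm:1/5-2/5rule}\eqref{thm:1/5-2/5rule:a}, write the extremal values as $2\cdot 5^k$ times integers, and factor $2\cdot 5^{k-1}$ out of the resulting expression, with the remaining two midpoints handled by symmetry (the paper computes $h(A)$ where you compute $h(C)$). The only cosmetic difference is your preliminary remark deriving the rule's form by affine-linearity, which is unnecessary since the theorem as stated already covers arbitrary integer $m$.
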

\begin{proof}
By assumption $h(a) = 2\cdot 5^k\tilde{a}$, $h(b) = 2\cdot 5^k\tilde{b}$ and $h(c) = 2\cdot 5^k\tilde{c}$ for some $\tilde{a}, \tilde{b}, \tilde{c} \in \ZZ$. By Theorem \ref{thm:1/5-2/5rule}\eqref{thm:1/5-2/5rule:a}, we can then determine the value of $h$ at midpoints:
	\begin{align*}
	h(A) &= \frac{1}{5}\big(2\cdot 5^k\tilde{a} + 4\cdot 5^k\tilde{b} + 4\cdot 5^k\tilde{c}\big) - 2\cdot 5^{k-1} m \\
	&= 2\cdot 5^{k-1} \big(\tilde{a} + 2\tilde{b}+2\tilde{c} - m\big).
	\end{align*}
	Hence $h(A)$ is divisible by $2\cdot 5^{k-1}$. The divisibility of $h(B)$ and
	$h(C)$ follows by symmetry.
\end{proof}

\begin{theorem}
Let $T$ be an arbitrary proper triangle of size $2^k$, then for all extremal
points $z\in\partial T$, $\tilde{u}(z)\in\ZZ$ and is divisible by $2\cdot 5^k$.
\end{theorem}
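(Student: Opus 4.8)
The plan is to run a \emph{downward} induction on the size, anchored at the large special triangles $V_K^+$ (whose three extremal values are known exactly) and propagating divisibility to every smaller cell through the self-similar subdivision of $\SG^+$, each descent step being a single application of Lemma~\ref{lem:tildeu_divisibility}.

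First I would fix an arbitrary proper triangle $T$ of size $2^k$ in $\SG^+$. Being finite, $T$ is contained in $V_K^+ = B_{2^K}\cap\SG^+$ for all large $K$, and the recursive construction $G_{j+1} = \bigcup_{i=0}^2 \varphi_{j,i}(G_j)$ (so that $G_j = V_j^+$) shows that the proper triangles of size $2^k$ lying in $V_K^+$ are precisely the $3^{\,K-k}$ level-$k$ cells produced by subdividing $V_K^+$ exactly $K-k$ times; in particular $T$ is one of them. I would then prove, by induction on $j$ descending from $K$ to $k$, the statement $P(j)$: \emph{every level-$j$ cell of $V_K^+$ has all three extremal values divisible by $2\cdot 5^j$.}

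The base case $P(K)$ is immediate, since the only level-$K$ cell is $V_K^+$ itself, with $\partial V_K^+ = \{(0,0),(2^K,0),(0,2^K)\}$ and values $\tilde u(0,0) = \tilde u(2^K,0) = 0$ and $\tilde u(0,2^K) = 2\cdot 5^K$ by Theorem~\ref{thm:utilde_uniqueness}. For the step $P(j)\Rightarrow P(j-1)$, each level-$(j-1)$ cell lies in a unique parent level-$j$ cell $T'$, whose three midpoints $A,B,C$ are exactly those extremal points of the sub-cells that are not already extremal points of $T'$. Applying Lemma~\ref{lem:tildeu_divisibility} to $T'$ with $h = \ell$ and $m = 1$ --- where $\ell(x,y) = \tilde u(\abs{x},\abs{y})$ is the reflected extension, which satisfies $\Delta\ell \equiv 1$ on all of $\SG$ and agrees with $\tilde u$ on $\SG^+$ --- the hypothesis $P(j)$ forces the midpoint values to be divisible by $2\cdot 5^{j-1}$, while the remaining sub-cell extremal values are parent extremal values, divisible by $2\cdot 5^j$ and hence by $2\cdot 5^{j-1}$. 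This is exactly $P(j-1)$; descending to $j = k$ gives the claim for $T$, and divisibility by the integer $2\cdot 5^k$ forces $\tilde u(z)\in\ZZ$ as well.

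The step that needs the most care, and the reason for passing to $\ell$, is the Laplacian hypothesis of Lemma~\ref{lem:tildeu_divisibility}: it demands $\Delta h \equiv m$ on the whole cell, but $\tilde u$ regarded as a function on $\SG^+$ alone has no well-defined graph Laplacian at points of the coordinate axes, and the origin is itself an extremal point. Using $\ell$, whose Laplacian equals $1$ everywhere (axes and origin included) and which coincides with $\tilde u$ throughout $\SG^+$, removes this issue, since every extremal point and midpoint of a triangle $T'\subset\SG^+$ again lies in $\SG^+$. The remaining ingredient is the purely combinatorial fact that each proper triangle of size $2^k$ is a standard level-$k$ cell and that the extremal points of the three sub-cells decompose into the parent's extremal points together with its midpoints; both follow at once from the self-similar construction of $\SG$ (cf.\ Figure~\ref{fig:proper_triangle}).
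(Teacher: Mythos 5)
Your proof is correct and takes essentially the same route as the paper: anchor at a large full iteration $V_K^+$, whose extremal values $0,\,0,\,2\cdot 5^K$ are given by Theorem~\ref{thm:utilde_uniqueness}, and descend cell by cell to $T$ through repeated application of Lemma~\ref{lem:tildeu_divisibility} (the paper phrases this as applying the generalized $\frac{1}{5}$--$\frac{2}{5}$ rule recursively starting from the smallest $V_l^+\supset T$). Your additional care in replacing $\tilde{u}$ by its reflected extension $\ell$ so that the hypothesis $\Delta h\equiv m$ of Lemma~\ref{lem:tildeu_divisibility} holds at the origin is a welcome tightening of a point the paper leaves implicit, but it is a refinement of the same argument rather than a different one.
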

\begin{proof}
Let $T$ be a proper triangle of size $2^k$, and  let $l\geq 1$ be the smallest
	integer such that $T$ is a subset of $V_l^+$. By the definition of $\tilde{u}$ and Theorem \ref{thm:utilde_uniqueness} the value of $\tilde{u}$ at all extremal points of $V_l^+$ is divisible by $2\cdot 5^l$. The values of $\tilde{u}$ at the extremal points of $T$ can be computed by applying 
Theorem \ref{thm:1/5-2/5rule}\ref{thm:1/5-2/5rule:a} recursively
at most $l-k$-times to the extremal points of $V_k^+$. The claim
follows then from Lemma \ref{lem:tildeu_divisibility}.
\end{proof}
As an immediate consequence we get the following.
\begin{corollary}
	$\tilde{u}(z) \in 2\ZZ$ for all $z\in\SG^+$.
\end{corollary}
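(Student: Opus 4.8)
The plan is to reduce the statement to the preceding theorem applied at the smallest scale $k=0$. Observe that by the recursive construction of $\SG^+$ as a union of translated copies of the basic triangle $G_0$ on $\{(0,0),(1,0),(0,1)\}$, every vertex $z\in\SG^+$ occurs as an extremal point of at least one \emph{unit} proper triangle, i.e.\ a proper triangle of size $2^0=1$. Thus the first step is simply to record this combinatorial fact: each vertex of $\SG^+$ is a corner of some size-$1$ proper triangle $T$.

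Next I would invoke the theorem immediately preceding the corollary, which asserts that for a proper triangle of size $2^k$ the value $\tilde{u}$ at each extremal point is divisible by $2\cdot 5^k$. Applying this with $k=0$ to the unit triangle $T$ containing $z$ as a corner yields that $\tilde{u}(z)$ is divisible by $2\cdot 5^0 = 2$. Since $z$ was arbitrary, this gives $\tilde{u}(z)\in 2\ZZ$ for all $z\in\SG^+$, which is exactly the claim.

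The only point that I expect to require a moment of care is confirming that the preceding theorem is genuinely valid at the base scale $k=0$, since the divisibility machinery in Lemma \ref{lem:tildeu_divisibility} is phrased for triangles of size $2^k$ with $k\geq 1$. This is not a real obstacle: the extremal points of a size-$1$ triangle are precisely the midpoints of a containing size-$2$ triangle, so Lemma \ref{lem:tildeu_divisibility} applies with $k=1$ and delivers divisibility by $2\cdot 5^{0}=2$ at those midpoints. Once this bookkeeping is in place the corollary is immediate, as the paper's phrasing ``an immediate consequence'' suggests; no further computation is needed beyond checking the base cases $\tilde{u}(x,0)=0$ and $\tilde{u}(x,1)=2$, both of which lie in $2\ZZ$.
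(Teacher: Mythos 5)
Your overall route is the same as the paper's: every vertex of $\SG^+$ is an extremal point of some unit proper triangle, and the preceding divisibility theorem applied at scale $2^0=1$ gives $2\cdot 5^0=2 \mid \tilde{u}(z)$. That is exactly the intended ``immediate consequence'', and your instinct that the only point needing care is the validity of that theorem at the base scale $k=0$ is also correct.

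However, the way you discharge that check rests on a false claim. You assert that the extremal points of a size-$1$ triangle are precisely the midpoints of a containing size-$2$ triangle. They are not: the midpoints of a size-$2$ proper triangle $\{b,\,b+(2,0),\,b+(0,2)\}$ are $\{b+(1,0),\,b+(0,1),\,b+(1,1)\}$, which form a downward-pointing triangle, whereas every proper triangle of size $1$ in $\SG^+$ is upward-pointing. Of the three corners of a unit cell, exactly two are midpoints of its parent size-$2$ cell and one is an extremal point of the parent. Concretely, $V_0^+=\{(0,0),(1,0),(0,1)\}$ sits inside $V_1^+$, whose midpoints are $(1,0)$, $(0,1)$, $(1,1)$; the corner $(0,0)$ is not among them, and indeed $(0,0)$ is not a midpoint of any proper triangle, so Lemma \ref{lem:tildeu_divisibility} alone can never reach it. The repair is immediate: the one corner of the unit cell that is not a midpoint of the parent is an extremal point of the parent size-$2$ triangle, so the preceding theorem applied at $k=1$ (where no base-scale issue arises) gives divisibility by $2\cdot 5=10$, hence by $2$. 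Equivalently, you can observe that the preceding theorem is valid at $k=0$ exactly as stated, because its proof descends from $V_l^+$ and, at each level, a corner of a child cell is either a corner of the parent cell (divisibility is inherited, since $2\cdot 5^{j+1}\mid x$ implies $2\cdot 5^{j}\mid x$) or a midpoint of the parent (Lemma \ref{lem:tildeu_divisibility}, which in this recursion is only ever invoked at scales $2^j$ with $j\geq 1$); the recursion therefore bottoms out at unit cells with divisibility by $2$. With that correction your argument is complete and coincides with the paper's.
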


\begin{theorem}
	\label{thm:utilde_positive}
The function $\tilde{u}$ is non-negative on all of $\SG^+$. Moreover $\tilde{u}(x,y)=0$ if and only if $y=0$.
\end{theorem}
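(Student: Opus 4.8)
The plan is to prove, by induction on $k$, the sharper statement that on the proper triangle $V_k^+$ one has $\tilde u\ge 0$, with $\tilde u(x,y)=0$ precisely when $y=0$. Since every vertex of $\SG^+$ lies in $V_k^+$ for $k$ large enough, this immediately gives the theorem. The base case $k=0$ is trivial: $V_0^+=\{(0,0),(1,0),(0,1)\}$ and the defining values $0,0,2$ already have the right sign and the right zero set.

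For the inductive step I would exploit the self-similar decomposition $V_{k+1}^+=T_0\cup T_1\cup T_2$ into three copies of $V_k^+$, where $T_0=V_k^+$, $T_1=V_k^++(2^k,0)$ and $T_2=V_k^++(0,2^k)$; by finite ramification these meet pairwise only in the three midpoints $C=(2^k,0)$, $B=(0,2^k)$, $A=(2^k,2^k)$. Applying the generalized $\tfrac15$-$\tfrac25$ rule (Theorem~\ref{thm:1/5-2/5rule}) to $V_{k+1}^+$, whose extremal values are $\tilde u(0,0)=\tilde u(2^{k+1},0)=0$ and $\tilde u(0,2^{k+1})=2\cdot 5^{k+1}$ by Theorem~\ref{thm:utilde_uniqueness}, I obtain the midpoint values $\tilde u(C)=0$, $\tilde u(B)=2\cdot 5^k$, $\tilde u(A)=2\cdot 5^k$. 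Thus each $T_i$ has all of its extremal values pinned down. For $T_0$ and $T_1$ the three extremal values are $0,0,2\cdot 5^k$, exactly those of $V_k^+$ itself (after the relevant translation); since a function with Laplacian $1$ on a proper triangle is determined by its values on the extremal points, $\tilde u$ restricted to $T_0$, resp.\ $T_1$, coincides with $\tilde u$ on $V_k^+$ translated by $0$, resp.\ by $(2^k,0)$. As translation preserves the $y$-coordinate, the induction hypothesis transfers verbatim: $\tilde u\ge 0$ on $T_0\cup T_1$, vanishing exactly on the intersection with $\{y=0\}$.

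The only delicate case, and the main obstacle, is $T_2$, which lies entirely in $\{y\ge 2^k\}$ and on which $\tilde u$ is \emph{subharmonic} ($\Delta\tilde u=1>0$); hence one cannot bound $\tilde u$ from below directly by its extremal values. The device I would use is to subtract off a copy of $\tilde u$: set $w(z)=\tilde u(z-(0,2^k))$, the translate of $\tilde u|_{V_k^+}$ onto $T_2$, which again has Laplacian $1$ on the interior of $T_2$. Then $f:=\tilde u-w$ is \emph{harmonic} at every non-corner vertex of $T_2$ — this is where finite ramification is essential, since it guarantees that each such vertex has all four of its $\SG$-neighbors inside $T_2$ — and its extremal values are $f(B)=f(A)=2\cdot 5^k$ and $f(0,2^{k+1})=2\cdot 5^{k+1}-2\cdot 5^k=8\cdot 5^k$. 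By the minimum principle for super-harmonic functions (\cite[Proposition 1.4]{kumagai_saintflour}), whose outer boundary is here exactly $\partial T_2$, this forces $f\ge 2\cdot 5^k$ throughout $T_2$.

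Combining this with $w\ge 0$ (the induction hypothesis applied to the translate of $V_k^+$) yields $\tilde u=f+w\ge 2\cdot 5^k>0$ on all of $T_2$, so $\tilde u$ has no zeros there, consistent with $T_2\subset\{y>0\}$. Assembling the three blocks, $\tilde u\ge 0$ on $V_{k+1}^+$ with $\tilde u=0$ if and only if $y=0$, which closes the induction. I expect the routine part to be the three midpoint computations and the verification that $w$ has the claimed Laplacian and extremal values; the conceptual crux is the reduction on $T_2$ from a subharmonic function to the harmonic difference $\tilde u-w$, together with the use of finite ramification to identify $\partial T_2$ as the boundary on which the minimum principle applies.
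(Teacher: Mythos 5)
Your proof is correct, and it takes a genuinely different route from the paper's. The paper works top-down through the dyadic subdivision: by Lemma \ref{lem:tildeu_divisibility} the corner values of any proper triangle of size $2^k$ become nonnegative \emph{integers} after division by $2\cdot 5^k$, and Theorem \ref{thm:1/5-2/5rule} gives the midpoint value $2\cdot 5^{k-1}\big(\tilde{a}+2\tilde{b}+2\tilde{c}-1\big)$; integrality then shows a midpoint can be negative only when all three normalized corner values vanish (impossible off the axis, since by the inductive invariant zeros occur only at $y=0$ and a proper triangle cannot have all three corners there), and can vanish only when $\tilde{a}=1$, $\tilde{b}=\tilde{c}=0$, which forces the midpoint onto the axis. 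You instead induct bottom-up on the scale: translation invariance (uniqueness given the corner data) settles $T_0$ and $T_1$, and on the top copy $T_2$, where $\tilde{u}$ is subharmonic so no direct minimum principle is available, you subtract the translate $w$ to get a harmonic difference $f$, use finite ramification to identify the relevant boundary of $T_2$ with its three corners, and conclude $f\geq 2\cdot 5^k$, hence $\tilde{u}=f+w\geq 2\cdot 5^k$ there. Your intermediate values all check out: $\tilde{u}(2^k,0)=0$, $\tilde{u}(0,2^k)=\tilde{u}(2^k,2^k)=2\cdot 5^k$, and $f=2\cdot 5^k,\,2\cdot 5^k,\,8\cdot 5^k$ at the corners of $T_2$. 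As for what each approach buys: the paper's proof recycles the divisibility lemma it needs anyway and reads off the exact zero set purely arithmetically; yours dispenses with integrality entirely (it would survive any real rescaling of $\tilde{u}$) and yields the strictly stronger quantitative bound $\tilde{u}\geq 2\cdot 5^k$ on $T_2$, at the cost of invoking the minimum principle and the isomorphism bookkeeping for the translates. One point to state precisely in a write-up: the ``determined by its values on the extremal points'' principle, and Theorem \ref{thm:1/5-2/5rule} itself, must be applied under the hypothesis that $\Delta h=1$ holds only at the \emph{non-extremal} vertices of the triangle, since your translates $w$ have no well-defined Laplacian at the corners (the neighbors of the preimages leave $\SG^+$); this weaker hypothesis is all that the proof of Theorem \ref{thm:1/5-2/5rule} actually uses, and the paper relies on the same reading when it applies the rule to $V_k^+$ in Theorem \ref{thm:utilde_uniqueness}.
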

\begin{proof}
	Let $a,b,c$ be the extremal points of a proper triangle of size $2^k$, and
	let $A,B,C$ be the midpoints of this triangle. By Lemma \ref{lem:tildeu_divisibility} there exists integers $\tilde{a}, \tilde{b}$ and
	$\tilde{c}$ such that $\tilde{u}(a) = 2\cdot 5^k\tilde{a}$, $\tilde{u}(b) = 2\cdot 5^k\tilde{b}$ and $\tilde{u}(c) = 2\cdot 5^k\tilde{c}$.
	Assume $\tilde{a},\tilde{b},\tilde{c}\geq 0$. It then follows again by
Theorem \ref{thm:1/5-2/5rule}\ref{thm:1/5-2/5rule:a} that
	\begin{equation*}
	\tilde{u}(A) = 2\cdot 5^{k-1}\big(\tilde{a} + 2\tilde{b} + 2\tilde{c} - 1\big).
	\end{equation*}
	Then 
$\tilde{u}(A)<0$ if and only if $\tilde{a}=\tilde{b}=\tilde{c}= 0$, and
	$\tilde{u}(A)=0$ if and only if $\tilde{a}=1$ and $\tilde{b}=\tilde{c}= 0$.
	As in Lemma \ref{lem:tildeu_divisibility} we can compute all values of
	$\tilde{u}$ with the {\it generalized $\frac{1}{5}-\frac{2}{5}$ rule} starting from a set $V_k^+$, for some $k\geq 1$. Since we have already show in Theorem \ref{thm:utilde_uniqueness} that $\tilde{u}(0,2^k) > 0$, it follows by induction that $\tilde{u}(x,y) > 0$
	for all $(x,y) \in \SG^+$ with $y>0$.
\end{proof}

\subsection{Explicit construction of the odometer function}
In section we will show that odometer of the divisible sandpile
with initial sand configuration $\mu(x) = 3^{k+1}\delta_o(x)$ is
essentially given by an affine transformation of the function
$\tilde{u}$. For this, we need to compute some particular values of $\tilde{u}$ explicitly.

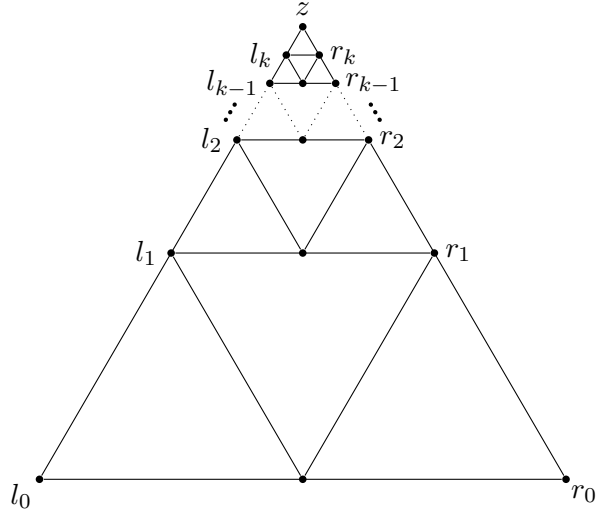
\begin{figure}[t]
\centering
\begin{tikzpicture}[scale=0.5,
every node/.style={fill, circle, inner sep = 1pt}]

\node[label=90:$z$] (z) at (120-30:8cm) {};
\node[label=210:$l_0$] (l0) at (2*120-30:8cm) {};
\node[label=-10:$r_0$] (r0) at (3*120-30:8cm) {};

\node[label=180:$l_1$] (l1) at ($(l0)!0.5!(z)$) {};
\coordinate (wl2) at ($(l1)!0.5!(z)$) {};
\node[fill = white, inner sep = 6pt, circle] (ldots0) at ($(wl2) - (0.6,0)$) {};
\node[label=180:$l_2$] (l2) at ($(l1)!0.5!(z)$) {};
\coordinate (wl3) at ($(l2)!0.5!(z)$) {};
\node[fill = white, inner sep = 6pt, circle] (ldots1) at ($(wl3) - (0.6,0)$) {};
\node[label=180:$l_{k-1}$] (l3) at ($(l2)!0.5!(z)$) {};
\node[label=180:$l_{k}$] (l4) at ($(l3)!0.5!(z)$) {};

\node[label=0:$r_1$] (r1) at ($(r0)!0.5!(z)$) {};
\coordinate (wr2) at ($(r1)!0.5!(z)$) {};
\node[fill = white, inner sep = 6pt, circle] (rdots0) at ($(wr2) + (0.6,0)$) {};
\node[label=0:$r_2$] (r2) at ($(r1)!0.5!(z)$) {};
\coordinate (wr3) at ($(r2)!0.5!(z)$) {};
\node[fill = white, inner sep = 6pt, circle] (rdots1) at ($(wr3) + (0.6,0)$) {};
\node[label=0:$r_{k-1}$] (r3) at ($(r2)!0.5!(z)$) {};
\node[label=0:$r_{k}$] (r4) at ($(r3)!0.5!(z)$) {};

\node (m0) at ($(l0)!0.5!(r0)$) {};
\node (m1) at ($(l1)!0.5!(r1)$) {};
\node (m2) at ($(l2)!0.5!(r2)$) {};
\node (m3) at ($(l3)!0.5!(r3)$) {};

\draw[line cap = round, dash pattern=on 0pt off 2\pgflinewidth, line width=1.6pt] (ldots0) -- (ldots1);
\draw[line cap = round, dash pattern=on 0pt off 2\pgflinewidth, line width=1.6pt] (rdots0) -- (rdots1);

\draw (l1) -- (r1) -- (m0) -- (l1);
\draw (l2) -- (m1) -- (r2) -- (l2);
\draw[dotted] (l2) -- (l3) -- (m2) -- (r3) -- (r2);
\draw (l3) -- (r3) -- (z) -- (l3);

\draw (l4) -- (m3) -- (r4) -- (l4);
\draw (l2) -- (l0) -- (r0) -- (r2);

\end{tikzpicture}
\caption{\label{fig:l_k} The construction used in Lemma \ref{lem:l_k}.}
\end{figure}
\begin{lemma}
	\label{lem:l_k}
For all $k\geq 1$ we have $\tilde{u}(0, 2^k-1) = \tilde{u}(1,2^k-1) = 1 - 3^{k+1} + 2\cdot 5^k$.
\end{lemma}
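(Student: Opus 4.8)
The plan is to compute both values simultaneously by descending through a nested family of proper triangles all sharing the apex $z = (0,2^k)$, applying the generalized $\frac{1}{5}-\frac{2}{5}$ rule at each scale, exactly as suggested by Figure \ref{fig:l_k}. Concretely, I would take $T_0 = V_k^+$, whose extremal points are $z=(0,2^k)$ together with $l_0 = (0,0)$ and $r_0 = (2^k,0)$, and then define recursively $l_i$ to be the midpoint of the edge $l_{i-1}z$ and $r_i$ the midpoint of the edge $r_{i-1}z$. A direct check of coordinates gives $l_i = (0,\,2^k - 2^{k-i})$ and $r_i = (2^{k-i},\,2^k - 2^{k-i})$, so the triangle $T_i$ with vertices $l_i,r_i,z$ is a proper triangle of size $2^{k-i}$; in particular $l_k = (0,2^k-1)$ and $r_k = (1,2^k-1)$ are exactly the two points in the statement, and they are the two midpoints of $T_{k-1}$ lying on the edges through $z$.

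First I would record the two ingredients already available: $\tilde{u}(z) = \tilde{u}(0,2^k) = 2\cdot 5^k$ by Theorem \ref{thm:utilde_uniqueness}, and $\tilde{u}(l_0) = \tilde{u}(r_0) = 0$ since both $l_0$ and $r_0$ lie on the $x$-axis. Writing $a_i = \tilde{u}(l_i)$ and $b_i = \tilde{u}(r_i)$, and applying Theorem \ref{thm:1/5-2/5rule} to the proper triangle $T_{i-1}$ (of size $2^{k-i+1}$, on which $\Delta\tilde{u}\equiv 1$), with $l_i$ the midpoint of edge $l_{i-1}z$ opposite the vertex $r_{i-1}$, yields the recursion
\begin{align*}
a_i &= \tfrac{1}{5}\big(2a_{i-1} + 2\tilde{u}(z) + b_{i-1}\big) - 2\cdot 5^{k-i}, \\
b_i &= \tfrac{1}{5}\big(2b_{i-1} + 2\tilde{u}(z) + a_{i-1}\big) - 2\cdot 5^{k-i},
\end{align*}
valid for $1\le i\le k$. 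The equality of the two claimed values then drops out immediately: since $a_0 = b_0 = 0$, an easy induction using the symmetry of these two equations gives $a_i = b_i$ for all $i$, whence $\tilde{u}(0,2^k-1) = a_k = b_k = \tilde{u}(1,2^k-1)$.

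It remains to evaluate $c_k$, where $c_i := a_i = b_i$ solves the scalar first-order recursion $c_i = \tfrac{3}{5}c_{i-1} + \tfrac{4}{5}\cdot 5^k - 2\cdot 5^{k-i}$ with $c_0 = 0$, obtained by substituting $a_{i-1}=b_{i-1}=c_{i-1}$ and $\tilde{u}(z)=2\cdot 5^k$ above. This is a routine linear recursion; solving it with a homogeneous term $A(3/5)^i$ together with particular solutions for the constant and the $5^{-i}$ forcing gives the closed form $c_i = 2\cdot 5^k + 5^{k-i}\big(1 - 3^{i+1}\big)$, and setting $i=k$ produces $c_k = 2\cdot 5^k + 1 - 3^{k+1}$, which is the asserted value.

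I expect the only genuinely delicate part to be the bookkeeping in applying the $\frac{1}{5}-\frac{2}{5}$ rule: one must keep track of which extremal vertex is \emph{opposite} the midpoint being computed, so that the weights $2,2,1$ are assigned correctly, and of the scale-dependent correction $-2\cdot 5^{j-1}$ attached to a triangle of size $2^j$, here with $j = k-i+1$. Everything else — the coordinate identification of $l_k$ and $r_k$, the symmetry argument giving $a_i=b_i$, and the solution of the recursion — is mechanical once this is set up correctly, and it can be cross-checked against the small cases $k=1$ (giving $c_1 = 2$, consistent with $\tilde{u}(x,1)=2$) and $k=2$ (giving $c_2 = 24$, matching Figure \ref{fig:utilde}).
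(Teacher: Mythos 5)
Your proposal is correct and follows essentially the same argument as the paper: the same nested family of triangles $l_i, r_i, z$ (your coordinates $l_i=(0,2^k-2^{k-i})$, $r_i=(2^{k-i},2^k-2^{k-i})$ agree with the paper's $s_i$-parametrization), the same application of the generalized $\frac{1}{5}$--$\frac{2}{5}$ rule with the correct opposite-vertex weights and scale correction $-2\cdot 5^{k-i}$, the same symmetry induction giving $\tilde{u}(l_i)=\tilde{u}(r_i)$, and the same linear recursion with the same closed-form solution. No gaps; your cross-checks at $k=1,2$ match the paper's Figure of $\tilde{u}$ as well.
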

\begin{proof}
	Let $z = (0, 2^k)$ and define $l_i = (0, s_i)$ and $r_i = (2^k-s_i,s_i)$, where $s_i = \sum_{j=1}^i 2^{k-j}$, for all $i=0,\ldots,k$ (see Figure \ref{fig:l_k}). In particular we have
	$l_0 = (0,0)$, $r_0 = (0, 2^k)$, $l_k = (0, 2^k-1)$ and $r_k = (1, 2^k-1)$.
	By the  Theorem \ref{thm:1/5-2/5rule}\ref{thm:1/5-2/5rule:a} we get for all $i=1,\ldots,k$:
	\begin{align}
	\label{eq:utilde_recursion}
	\begin{aligned}
	\tilde{u}(l_i) &= \frac{1}{5}\big(\tilde{u}(r_{i-1}) + 2\tilde{u}(l_{i-1}) + 2 \tilde{u}(z) \big) - 2\cdot 5^{k-i}, \\
	\tilde{u}(r_i) &= \frac{1}{5}\big(\tilde{u}(l_{i-1}) + 2\tilde{u}(r_{i-1}) + 2 \tilde{u}(z) \big) - 2\cdot 5^{k-i}.
	\end{aligned}
	\end{align}
	Since $\tilde{u}(l_0) = \tilde{u}(r_0) = 0$ it follows by induction that
	$\tilde{u}(l_i) = \tilde{u}(r_i)$ for all $i = 0,\ldots, k$.
	This simplifies the recursion \eqref{eq:utilde_recursion} to
	\begin{align}
	\label{eq:utilde_recursion2}
	\begin{aligned}
	\tilde{u}(l_i) &= \frac{1}{5}\big( 3 \tilde{u}(l_{i-1}) + 2\tilde{u}(z)\big) - 2\cdot 5^{k-i} \\
	&= \frac{3}{5} \tilde{u}(l_{i-1}) + 4\cdot 5^{k-1} - 2\cdot 5^{k-i},
	\end{aligned}
	\end{align}
	where in the last line we used that $\tilde{u}(z) = 2\cdot 5^k$ by Theorem
	\ref{thm:utilde_uniqueness}. The linear recursion \eqref{eq:utilde_recursion2} has
	the explicit solution
	\begin{equation*}
	\tilde{u}(l_i) = 5^{k-i}\big(1- 3^{i+1} + 2\cdot 5^i\big).
	\end{equation*}
	Setting $i=k$ gives the result.
\end{proof}

Let $\psi_k: \ZZ^2 \to \ZZ^2$ be the function given by
$\psi_k(x,y) = (y,2^k-x-y)$, for all $k\geq 0$. We have $\psi_k(0,0) = (0,2^k)$, $\psi_k(0,2^k) = (2^k,0)$ and $\psi(2^k,0) = (0, 0)$. That is, $\psi_k$
maps $\partial V_k^+= \big\{(0,0), (2^k,0), (0,2^k)\big\}$ onto itself. Moreover it is easy to check that $\psi_k$ is bijective on $V_k^+$ and acts as a rotation by
$-120^\circ$ around the center of the biggest hole in $V_k^+$.

\begin{theorem}
	\label{thm:odometer_V_k}
	Let $u_k:\SG \to \RR_{\geq 0}$ be the odometer function of divisible sandpile on $\SG$ with initial mass distribution $\mu_0 \equiv 3^{k+1}\delta_{(0,0)}$. Then for
	all $k\in \NN_0$
	\begin{equation}
	\label{eq:u_vk} 
	u_k(x,y) = \begin{cases}
	(\tilde{u} \circ \psi_k)(\abs{x}, \abs{y}), \quad&\text{ if } (x,y)\in B_{2^k},\\
	0                                         ,  &\text{ otherwise}.
	\end{cases}
	\end{equation}
	Moreover the sandpile cluster $\mathcal{S} = \big\{z\in\SG:\: u_k(z)>0\big\} = B_{2^k-1}$.
\end{theorem}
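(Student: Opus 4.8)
The plan is to guess the formula \eqref{eq:u_vk} and verify it through Lemma \ref{lem:guess_odometer}. Write $u_{\star}$ for the right-hand side of \eqref{eq:u_vk}, put $\mu_{\star}=\mu_0+\Delta u_{\star}$ with $\mu_0=3^{k+1}\delta_o$ and $h\equiv 1$, and set $A_{\star}=\{u_{\star}>0\}$. The first, easy step is to identify $A_{\star}$. Since $\psi_k(x,y)=(y,2^k-x-y)$ and, by Theorem \ref{thm:utilde_positive}, $\tilde u(a,b)=0$ exactly when $b=0$, for $(x,y)\in\SG^+$ one has $u_{\star}(x,y)>0$ iff $2^k-x-y>0$, i.e. iff $x+y\le 2^k-1$. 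Combined with the symmetry $u_{\star}(x,y)=u_{\star}(-x,-y)$ and the cut-off outside $B_{2^k}$, this gives $A_{\star}=B_{2^k-1}$, a finite set, so the cluster statement $\mathcal{S}=B_{2^k-1}$ will follow automatically once $u_{\star}=u_k$ is established. It then remains to check $\mu_{\star}\equiv 1$ on $A_{\star}$ and $\mu_{\star}\le 1$ on all of $\SG$.

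The core of the verification rests on the fact that $\psi_k$ is an automorphism of the induced subgraph $V_k^+$ mapping the corner set $\partial V_k^+$ onto itself, together with $\Delta\tilde u\equiv 1$ on $\SG^+\setminus\{o\}$ (Definition \ref{def:utilde} and the following Remark). The key geometric observation I would use is that the only vertices of $V_k^+$ having an $\SG$-neighbour outside $V_k^+$ are the two extremal corners $(2^k,0)$ and $(0,2^k)$; every other vertex of $V_k^+$, including each non-corner point of the hypotenuse $S_{2^k}\cap\SG^+$ (which faces the empty central hole), has all four neighbours inside $V_k^+$. Hence for $z\in V_k^+\setminus\{o,(2^k,0),(0,2^k)\}$ both $z$ and $\psi_k(z)$ have all neighbours in $V_k^+$ and $\psi_k(z)\neq o$, so the automorphism identity yields
\begin{equation*}
\Delta u_{\star}(z)=\Delta\tilde u\big(\psi_k(z)\big)=1 .
\end{equation*}
Applied to the interior points $z\in B_{2^k-1}\setminus\{o\}$ this gives $\mu_{\star}(z)=1$, and applied to the non-corner sphere points (where $\psi_k(z)=(y,0)$ with $y\ge 1$) it gives $\mu_{\star}=1$ on $\partial_I B_{2^k}$; by the point reflection $(x,y)\mapsto(-x,-y)$ of $\SG$ the same holds on the negative branch.

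The remaining special vertices I would handle by hand. At the origin I invoke Lemma \ref{lem:l_k}: all four neighbours of $o$ carry the value $\tilde u(0,2^k-1)=\tilde u(1,2^k-1)=1-3^{k+1}+2\cdot 5^k$, while $u_{\star}(o)=\tilde u(0,2^k)=2\cdot 5^k$ by Theorem \ref{thm:utilde_uniqueness}; thus $\Delta u_{\star}(o)=1-3^{k+1}$ and $\mu_{\star}(o)=3^{k+1}+\Delta u_{\star}(o)=1$. At each extremal corner, say $(2^k,0)$, exactly one neighbour $(2^k-1,0)$ lies strictly inside and carries $u_{\star}=\tilde u(0,1)=2$, whereas its other three neighbours (one on $S_{2^k}$, two outside $B_{2^k}$) carry $0$, so $\Delta u_{\star}=\tfrac12$ and $\mu_{\star}=\tfrac12$. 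Finally, for $z\notin B_{2^k}$ all neighbour values vanish, giving $\mu_{\star}(z)=0$. Collecting the cases, $\mu_{\star}\equiv 1$ on $A_{\star}=B_{2^k-1}$ and $\mu_{\star}\le 1$ everywhere, so Lemma \ref{lem:guess_odometer} gives $u_{\star}=u_k$ and $\mathcal{S}=A_{\star}=B_{2^k-1}$. The case $k=0$, where Lemma \ref{lem:l_k} is vacuous, is checked directly: one toppling turns $3\delta_o$ into height $1$ at $o$ and $\tfrac12$ at each neighbour.

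I expect the main obstacle to be the geometric bookkeeping underlying the automorphism step, namely justifying that within $V_k^+$ only the two extremal corners have neighbours outside $V_k^+$, so that $\Delta u_{\star}(z)=\Delta\tilde u(\psi_k(z))$ holds uniformly at interior \emph{and} hypotenuse vertices. This self-similarity fact, combined with the exact boundary value at the origin supplied by Lemma \ref{lem:l_k}, is precisely what makes the computation close; the rest is a routine application of Lemma \ref{lem:guess_odometer}.
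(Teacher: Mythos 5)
Your proposal is correct and takes essentially the same route as the paper: both verify the guessed formula via Lemma \ref{lem:guess_odometer}, using $\Delta\tilde u\equiv 1$ away from the origin to handle all interior and sphere points, and using Lemma \ref{lem:l_k} together with Theorem \ref{thm:utilde_uniqueness} to compute $\Delta u_k(o)=1-3^{k+1}$; your write-up merely makes explicit the $\psi_k$-automorphism bookkeeping, the corner value $1/2$, and the base case $k=0$, which the paper asserts without detail. One harmless slip: the origin is a third vertex of $V_k^+$ with neighbours outside $V_k^+$ (namely in $\SG^-$), so your geometric claim should list three exceptional corners rather than two --- but since you exclude $o$ from the automorphism step and treat it separately, nothing breaks.
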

\begin{proof}
We check that the requirements in Lemma \ref{lem:guess_odometer}
are fulfilled for the  function $u_k(x,y)$ defined in \eqref{eq:u_vk}.
By construction and Theorem \ref{thm:utilde_positive}, $u_k(z) > 0$ if and only if $z\in B_{2^k-1}$. We need to check that $\Delta u_k(z) = 1 - \mu_0$ for all
	$z\in B_{2^k-1}$. For all $z\in B_{2^k-1}\setminus\{(0,0)\}$ is follows directly since $\Delta\tilde{u}(x,y) = 1$ for all $y\geq 1$ by definition.
For $z\in S_{2^k}$, we have $\Delta u(z) \in \{1/2, 1\}$.
	
Thus we only need to calculate the Laplacian of $u_k$ at the origin. Since
$\psi_k(0,0) = (0,2^k)$ we have
	$u_k(0,0) = \tilde{u}(0,2^k) = 2\cdot 5^k$ by Theorem \ref{thm:utilde_uniqueness}.
	For $z\sim(0,0)$ we have $\psi_k(z) \in \{(1,2^k-1), (0,2^k-1)\}$. Hence
	$u_k(z) = 1 - 3^{k+1} + 2\cdot 5^k$ by Lemma \ref{lem:l_k} for all $z\sim (0,0)$.
	Thus $\Delta u_k(0,0) = \frac{1}{4}\sum_{z\sim (0,0)} u_k(z) - u_k(0,0) = 
	1- 3^{k+1} + 2\cdot 5^k - 2\cdot 5^k = 1 - 3^{k+1}$, which completes the proof.
\end{proof}

\begin{remark}
Theorem \ref{thm:odometer_V_k} is a more explicit version of Theorem \ref{thm:odometer_bn} for the special case $n=2^k$.
\end{remark}

%In the next lemma we extend this result to the one sided Sierpinski Gasket.
%
%Let $\SG^\circ$ be the positive quadrant of the graphical Sierpinski Gasket with two added loops at the origin. That is, $\SG^\circ$ is a multigraph whose
%vertex set is equal to the vertex set of $\SG^+$ and whose edge set is the union
%of the edge set of $\SG^+$ with two loops of the form $(o,o)$.
%
%%\input{graphics/sg_circ.tex}
%\begin{theorem}
%	\label{thm:odometer_SGcirc}
%	Let $u_k^\circ:\SG^\circ \to \RR_{\geq 0}$ be the odometer function of divisible sandpile on $\SG^\circ$ with initial mass distribution
%	$\mu_0 \equiv \frac{3^{k+1} +1}{2}\delta_{(0,0)}$. Then for all $k\geq 0$
%	\begin{equation*}
%	u_k^\circ(x,y) = u_k(x,y),
%	\end{equation*}
%	for all $(x,y)\in\SG^\circ$, and $u_k$ is given in Theorem \ref{thm:odometer_SG}.
%\end{theorem}
%\begin{proof}
%	We only need to check the laplacian of $u_k^\circ$ at the origin. That is,
%	\begin{align*}
%	\Delta u_k^\circ &= \frac{1}{4}\big(u_k^\circ(1,0) + u_k^\circ(1,0) + 2 u_k^\circ(0,0)\big) - u_k^\circ(0,0) \\
%	&= 1 - \frac{3^{k+1} + 1}{2}.
%	\end{align*}
%\end{proof}

%\input{graphics/cell_test}

\section{Open questions}

The Sierpinski gasket graph $\SG$ is one of the simplest pre-fractals which has the property of being finitely ramified. This is very often used throughout the paper, especially when constructing explicitly the function with  Laplacian $1$ on $\SG$.  It might be interesting to prove a limit shape theorem for the divisible sandpile model on the Sierpinski carpet graph, 
which is infinitely ramified.  The Sierpinski carpet still has some special features (symmetry in particular), but is general enough so that one needs to develop more powerful techniques in order to analyze the behavior of the divisible sandpile model, model which turned out to be very helpful in proving limit shape theorems for the stochastic growth model {\it internal DLA}.

\begin{appendices}
\section{Generalized $\frac{1}{5}-\frac{2}{5}$ rule}
	
The following version of the {\it $\frac{1}{5}-\frac{2}{5}$ rule for harmonic functions} is probably known but since we did not find it in the literature in the form we need it here, we add a proof of this fact for completeness.
	
\begin{theorem}[$\frac{1}{5}-\frac{2}{5}$ rule for functions with constant Laplacian]
\label{thm:1/5-2/5rule}
The following two properties are true for all $k\in\NN$:
		\begin{enumerate}[(a)]
			\item\label{thm:1/5-2/5rule:a}
			Let $T$ be a proper triangle of size $2^k$ and extremal points
			$\partial T = \big\{a, b, c\big \}$, and midpoints $\big\{A,B,C\big\}$ as in Figure \ref{fig:proper_triangle}.
			Let $h:\SG \to\RR$ be a function such that $\Delta h(x) = m$ for
			all $x\in T$. Then the values of $h$ at the midpoints are given by
			\begin{align*}
			h(A) &= \frac{1}{5}\big(h(a) + 2 h(b) + 2 h(c)\big) - 2\cdot 5^{k-1}m, \\
			h(B) &= \frac{1}{5}\big(2 h(a) + h(b) + 2 h(c)\big) - 2\cdot 5^{k-1}m, \\
			h(C) &= \frac{1}{5}\big(2 h(a) + 2 h(b) + h(c)\big) - 2\cdot 5^{k-1}m. \\
			\end{align*}
			\item\label{thm:1/5-2/5rule:b}
			Let $B$ be a proper ball of size $2^{k-1}$ with center $D$
			and extremal points $\big\{d_1,d_2,d_3,d_4\big\}$ as in
			Figure \ref{fig:proper_triangle}.
			Let $h:\SG \to\RR$ be a function such that $\Delta h(x) = m$ for
			all $x\in B$. Then the value of $h$ at the center point is given by
			\begin{align*}
			h(D) = \frac{1}{4}\big(h(d_1) + h(d_2) + h(d_3) + h(d_4)\big) - 5^{k-1}m.
			\end{align*}
		\end{enumerate}
	\end{theorem}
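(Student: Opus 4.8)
The plan is to prove parts \ref{thm:1/5-2/5rule:a} and \ref{thm:1/5-2/5rule:b} together by induction on $k$, the engine being a self-similar \emph{decimation} step that passes from scale $2^{k}$ to scale $2^{k-1}$ while multiplying the Laplacian by a factor $5$. Throughout, I only ever impose $\Delta h = m$ at the interior (non-extremal) vertices; the extremal values are free data, and the claimed formulas should follow from the interior equations alone.

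First I would settle the base case $k=1$ by direct linear algebra. For part \ref{thm:1/5-2/5rule:a} on a size-$2$ triangle the only interior vertices are the three midpoints $A,B,C$, each of degree $4$ with neighbours inside $\{a,b,c,A,B,C\}$; writing out $\Delta h(A)=\Delta h(B)=\Delta h(C)=m$ produces a $3\times 3$ system whose matrix is $J-5I$, where $J$ is the all-ones matrix. This matrix is nonsingular (its eigenvalues are $-2$ and $-5$), and inverting it gives $h(A)=\tfrac15\big(h(a)+2h(b)+2h(c)\big)-2m$ together with its cyclic analogues, which is exactly the claim since $2\cdot 5^{0}m = 2m$. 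For part \ref{thm:1/5-2/5rule:b} on a size-$1$ ball the center $D$ is the unique interior vertex and $\Delta h(D)=m$ reads directly as $h(D)=\tfrac14\sum_i h(d_i)-m$, matching $5^{0}m=m$.

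The heart of the argument is a Decimation Lemma: if $\Delta h=m$ on the interior of a proper triangle or ball of size $2^{k}$ with $k\ge 2$, and $\hat h(x,y):=h(2x,2y)$ records the values of $h$ on the even sublattice, which after halving is itself a proper triangle (resp.\ ball) of size $2^{k-1}$, then $\Delta\hat h=5m$ on its interior. I would prove this by a purely local computation at each interior vertex $v=2\hat v$. Since $v$ is a non-extremal vertex of $\SG$, it is a cut point shared by exactly two size-$2$ cells, its four $\SG$-neighbours are the four cell-midpoints incident to it, and its four decimated neighbours $2\hat w$ are the remaining corners of those two cells. Applying the already-proved base case \ref{thm:1/5-2/5rule:a} to each size-$2$ cell expresses every genuine neighbour of $v$ through $h(v)$ and the decimated neighbours; substituting into $\Delta h(v)=m$ and clearing the factor $\tfrac15$ collapses everything to $\tfrac14\sum_{w}\hat h(\hat w)-\hat h(\hat v)=5m$. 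The factor $5$ is precisely what the base-case coefficients $\tfrac15,\tfrac25$ generate, and the identical computation applies at the center of a ball, where the two adjacent cells lie in $\SG^+$ and $\SG^-$.

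With the Decimation Lemma available the inductive step is immediate: for $k\ge 2$ the midpoints of a size-$2^{k}$ triangle (resp.\ the center and extremal points of a size-$2^{k-1}$ ball) have even coordinates, so they survive decimation and coincide with $\hat h$; applying the induction hypothesis to $\hat h$, which carries Laplacian $5m$, turns its correction term $2\cdot 5^{k-2}(5m)=2\cdot 5^{k-1}m$ (resp.\ $5^{k-2}(5m)=5^{k-1}m$) into the stated constant, while the scale-invariant harmonic weights $\tfrac15,\tfrac25$ reproduce the $m=0$ rule. I expect the main obstacle to be the Decimation Lemma, and within it the verification that every interior vertex genuinely has the uniform two-cell local picture and that the bookkeeping of which corner carries the weight-$1$ role is consistent around each junction; the emergence of the exact factor $5$ rests entirely on this local elimination of the fine-scale vertices.
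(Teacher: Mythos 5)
Your proof is correct, and it takes a genuinely different route from the paper's. The paper also inducts on $k$ with the identical $k=1$ base case, but its inductive step goes \emph{upward by composition}: a triangle of size $2^{k+1}$ is decomposed into three size-$2^k$ triangles pairwise sharing corners together with three size-$2^k$ proper balls centered at $A,B,C$, the induction hypothesis for \emph{both} (a) and (b) is applied to every piece, and the resulting coupled $12\times 12$ (for triangles) and $7\times 7$ (for balls) linear systems in the intermediate values are inverted explicitly. You instead go \emph{downward by decimation}: your renormalization lemma, that restriction to the even sublattice turns constant Laplacian $m$ into constant Laplacian $5m$ on the half-size gasket, is a purely local two-cell computation using only the $k=1$ case of (a), after which the induction hypothesis at scale $k-1$ applied to $\hat h$ delivers (a) and (b) at scale $k$ simultaneously. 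The elimination you were worried about does close up symmetrically: in one cell with corners $v,p_1,p_2$, the two midpoints adjacent to $v$ sum to $\tfrac15\bigl(3h(p_1)+3h(p_2)+4h(v)\bigr)-4m$, so $\Delta h(v)=m$ over the two cells collapses to $\tfrac14\sum_{\hat w\sim \hat v}\hat h(\hat w)-\hat h(\hat v)=5m$, with each far corner receiving the same weight $\tfrac35$. Comparing the two: the paper's route is mechanical and self-contained but requires carrying (a) and (b) through one coupled step and inverting sizable matrices; yours isolates the factor $5$ as the familiar decimation constant of the gasket, reuses only the base case, and is shorter and more conceptual. The one thing you should state explicitly (the paper is equally silent about the analogous facts underlying its decomposition) is the structural input from self-similarity: a proper triangle or ball of size $2^k$ is aligned with the even sublattice, its even vertices decimate to a proper triangle or ball of size $2^{k-1}$, and non-extremal vertices decimate to non-extremal ones, which is exactly what guarantees that every interior even vertex has the uniform two-cell picture with both cells contained in the region.
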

	\begin{figure}[t]
		\centering
		\input{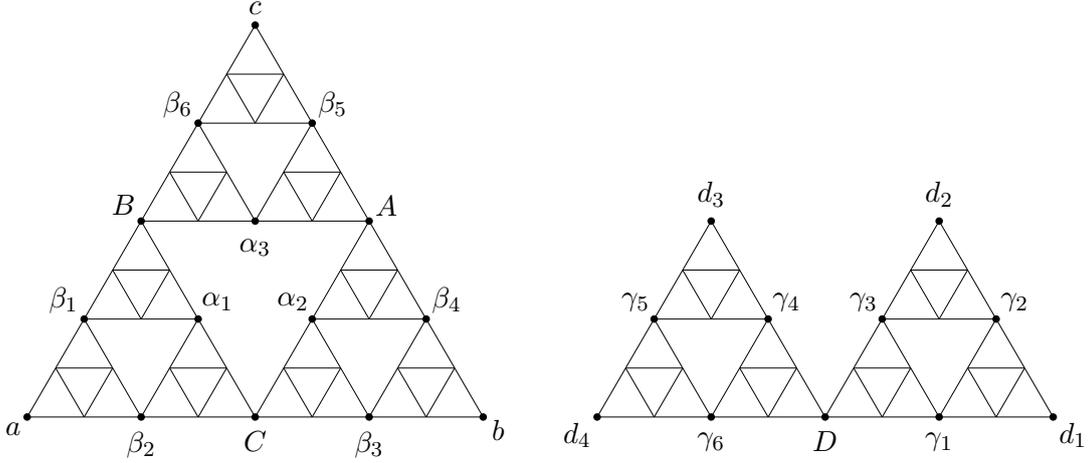} %\hfill
		\caption{\label{fig:proper_triangle} \textit{Left:} Proper triangle of size $2^3$, with extremal points $\{a,b,c\}$ and midpoints $\{A,B,C\}$. 
			\textit{Right:} Proper ball of size $2^2$ with center $D$ and extremal points $\{d_1,d_2,d_3,d_4\}$.} 
	\end{figure}
	\begin{proof}
		The proof goes by induction. First we consider the basis case $k=1$. To see relation \eqref{thm:1/5-2/5rule:a} note the the
		function values at the midpoints are related to the values at
		the extremal points, by the following linear equation:
		\begin{align*}
		\left(\begin{array}{rrr}
		-4 &  1 &  1 \\
		1 & -4 &  1 \\
		1 &  1 & -4
		\end{array}\right)
		\cdot
		\begin{pmatrix}
		h(A) \\ h(B) \\ h(C)
		\end{pmatrix}
		+
		\begin{pmatrix}
		0 & 1 & 1 \\
		1 & 0 & 1 \\
		0 & 1 & 1
		\end{pmatrix}
		\cdot
		\begin{pmatrix}
		h(a) \\ h(b) \\ h(c)
		\end{pmatrix}
		=
		\begin{pmatrix}
		4m \\ 4m \\ 4m
		\end{pmatrix}.
		\end{align*}
		Solving this equation gives relation \eqref{thm:1/5-2/5rule:a}.
		For $k=1$ relation \eqref{thm:1/5-2/5rule:b} is just $\Delta h(D) = m$, and is thus true by assumption.
		
		For the inductive step assume that both relations \eqref{thm:1/5-2/5rule:a} and \eqref{thm:1/5-2/5rule:b} are true for some $k\geq 1$.
A proper triangle $T$ of size $2^{k+1}$ consists of three proper triangles of size $2^k$ which pairwise share one extremal point.
		See the left hand side of Figure \ref{fig:proper_triangle}. For
		each of these smaller triangles we can use relation \eqref{thm:1/5-2/5rule:a} for the points $\alpha_1,\alpha_2, \alpha_3$ and $ \beta_1,\ldots,\beta_6$ in the notation of Figure \ref{fig:proper_triangle}.
Moreover note that points $\big\{b, c, B, C\big\}$ are the extremal
points of a proper ball of size $2^k$ with center $A$. Similarly
$T$ contains two more proper balls of size $2^k$ with center points
$B$ and $C$. We can apply relation \eqref{thm:1/5-2/5rule:b} to these three proper balls. Let 
		\begin{equation*}
		\mathbf{h}=\big(h(A), h(B), h(C), h(\alpha_1), \ldots, h(\alpha_3), h(\beta_1), \ldots, h(\beta_6)\big)^T
		\end{equation*} be the column vector of unknowns. This leads
		to a system of linear equations given by
		\begin{equation*}
		\mathbf{M} \cdot\mathbf{h} = \mathbf{Y}\cdot
		\big(h(a), h(b), h(c), m\big)^T,
		\end{equation*}
		where the matrices $\mathbf{M}$ and $\mathbf{Y}$ are given by
		\begin{align*}
		\mathbf{M}=\left(\begin{array}{rrr*{9}{>{\greytest}r}}
		0 & -2 & -2 & 5 & 0 & 0 & 0 & 0 & 0 & 0 & 0 & 0 \\
		-2 &  0 & -2 & 0 & 5 & 0 & 0 & 0 & 0 & 0 & 0 & 0 \\
		-2 & -2 &  0 & 0 & 0 & 5 & 0 & 0 & 0 & 0 & 0 & 0 \\
		0 & -2 & -1 & 0 & 0 & 0 & 5 & 0 & 0 & 0 & 0 & 0 \\
		0 & -1 & -2 & 0 & 0 & 0 & 0 & 5 & 0 & 0 & 0 & 0 \\
		-1 &  0 & -2 & 0 & 0 & 0 & 0 & 0 & 5 & 0 & 0 & 0 \\
		-2 &  0 & -1 & 0 & 0 & 0 & 0 & 0 & 0 & 5 & 0 & 0 \\
		-2 & -1 &  0 & 0 & 0 & 0 & 0 & 0 & 0 & 0 & 5 & 0 \\
		-1 & -2 &  0 & 0 & 0 & 0 & 0 & 0 & 0 & 0 & 0 & 5 \\
		4 & -1 & -1 & 0 & 0 & 0 & 0 & 0 & 0 & 0 & 0 & 0 \\
		-1 &  4 & -1 & 0 & 0 & 0 & 0 & 0 & 0 & 0 & 0 & 0 \\
		-1 & -1 &  4 & 0 & 0 & 0 & 0 & 0 & 0 & 0 & 0 & 0
		\end{array}\right) \text{ and }
		\mathbf{Y} = \left(\begin{array}{*{4}{>{\greytest}r}}
		1 & 0 & 0 & -2 \cdot 5^{k} \\
		0 & 1 & 0 & -2 \cdot 5^{k} \\
		0 & 0 & 1 & -2 \cdot 5^{k} \\
		2 & 0 & 0 & -2 \cdot 5^{k} \\
		2 & 0 & 0 & -2 \cdot 5^{k} \\
		0 & 2 & 0 & -2 \cdot 5^{k} \\
		0 & 2 & 0 & -2 \cdot 5^{k} \\
		0 & 0 & 2 & -2 \cdot 5^{k} \\
		0 & 0 & 2 & -2 \cdot 5^{k} \\
		0 & 1 & 1 & -4 \cdot 5^{k} \\
		1 & 0 & 1 & -4 \cdot 5^{k} \\
		1 & 1 & 0 & -4 \cdot 5^{k}
		\end{array}\right).
		\end{align*}
		
Then the first three lines of $\mathbf{h} = \mathbf{M}^{-1} \cdot\mathbf{Y}\cdot
		\big(h(a), h(b), h(c), m\big)^T$ give relation \eqref{thm:1/5-2/5rule:a} for $k+1$:
		\begin{align*}
		\begin{pmatrix}
		h(A) \\ h(B) \\ h(C)
		\end{pmatrix}
		= \frac{1}{5}
		\begin{pmatrix}
		1 & 2 & 2 & -2 \cdot 5^{k+1} \\
		2 & 1 & 2 & -2 \cdot 5^{k+1} \\
		2 & 2 & 1 & -2 \cdot 5^{k+1}
		\end{pmatrix} \cdot
		\begin{pmatrix}
		h(a) \\ h(b) \\ h(c) \\ m
		\end{pmatrix}
		\end{align*}
To prove the inductive step for relation \eqref{thm:1/5-2/5rule:b},
note that a proper ball of radius $2^k$ (see the right side of Figure \ref{fig:proper_triangle}), consists of two proper triangles
of size $2^k$ with extremal points $\big\{D, d_1, d_2\big\}$ resp.
$\big\{D, d_3, d_4\big\}$. Moreover $D$ is the center of a proper ball of size $2^{k-1}$ and extremal points $\big\{\gamma_1, \gamma_3,\gamma_4,\gamma_6\}$, in the notation of Figure \ref{fig:proper_triangle}.
		
Let $\bm{\widetilde{h}} = \big(h(D), h(\gamma_1), \ldots, h(\gamma_6)\big)^T$ be the vector of unknowns.
Applying the induction hypothesis to these subsets leads to the linear equation
		\begin{equation*}
		\bm{\widetilde{M}}\cdot\bm{\widetilde{h}} = \bm{\widetilde{Y}}\cdot
		\big(h(d_1),\ldots, h(d_4), m\big)^{T},
		\end{equation*}
		where the matrices $\bm{\widetilde{M}}$ and $\bm{\widetilde{Y}}$ are given by
		\begin{equation*}
		\bm{\widetilde{M}}=
		\left(\begin{array}{*{7}{>{\greytest}r}}
		4 & -1 & \phantom{-}0 & -1 & -1 & \phantom{-}0 & -1 \\
		-2 & 5 & 0 & 0 & 0 & 0 & 0 \\
		-1 & 0 & 5 & 0 & 0 & 0 & 0 \\
		-2 & 0 & 0 & 5 & 0 & 0 & 0 \\
		-2 & 0 & 0 & 0 & 5 & 0 & 0 \\
		-1 & 0 & 0 & 0 & 0 & 5 & 0 \\
		-2 & 0 & 0 & 0 & 0 & 0 & 5
		\end{array}\right)
		\text{ and }
		\bm{\widetilde{Y}} = 
		\left(\begin{array}{*{4}{>{\greytest}r}l}
		0 & 0 & 0 & 0 & -4 \cdot 5^{k - 1} \\
		2 & 1 & 0 & 0 & -2 \cdot 5^{k} \\
		2 & 2 & 0 & 0 & -2 \cdot 5^{k} \\
		1 & 2 & 0 & 0 & -2 \cdot 5^{k} \\
		0 & 0 & 2 & 1 & -2 \cdot 5^{k} \\
		0 & 0 & 2 & 2 & -2 \cdot 5^{k} \\
		0 & 0 & 1 & 2 & -2 \cdot 5^{k}
		\end{array}\right).
		\end{equation*}
		Then the first line of $\bm{\widetilde{h}} = \bm{\widetilde{M}}^{-1} \cdot\bm{\widetilde{Y}}\cdot
		\big(h(d_1), \ldots, h(d_4), m\big)^T$ gives relation \eqref{thm:1/5-2/5rule:b} for $k+1$:
		\begin{equation*}
		h(D) = 
		\left(\begin{array}{rrrrr}
		\dfrac{1}{4} & \dfrac{1}{4} & \dfrac{1}{4} & \dfrac{1}{4} &
		-5^{k}
		\end{array}\right)\cdot %\big(h(d_1), \ldots, h(d_4), m\big)^T.
		\begin{pmatrix}
		h(d_1) \\
		\vdots \\
		h(d_4) \\
		m
		\end{pmatrix},
		\end{equation*}
and this finishes the proof.
\end{proof}
\end{appendices}

\paragraph{Acknowledgements.}
The research of Wilfried Huss was supported by the Austrian Science Fund (FWF): 
\href{http://pf.fwf.ac.at/en/research-in-practice/in-the-spotlight-schroedinger/list-of-schroedinger-fellows/2014/8523}{J3628-N26} and P25510-N26.
The research of Ecaterina Sava-Huss was supported by the Austrian Science Fund (FWF):
\href{http://pf.fwf.ac.at/en/research-in-practice/in-the-spotlight-schroedinger/list-of-schroedinger-fellows/2014/210946}{J3575-N26}.

\bibliography{sandpile-gasket}{}
\bibliographystyle{alpha_arxiv}

\textsc{Wilfried Huss, Institute of Discrete Mathematics, Graz University of Technology, 8010 Graz, Austria.}
\texttt{huss@math.tugraz.at} \\ \url{http://www.math.tugraz.at/~huss}

\textsc{Ecaterina Sava-Huss, Institute of Discrete Mathematics, Graz University of Technology, 8010 Graz, Austria.}
\texttt{sava-huss@tugraz.at}\\
\url{http://www.math.tugraz.at/~sava} 

\end{document}